\documentclass{CSML}

\pdfoutput=1

% LMCS Layouting Macros
\usepackage{lastpage}
%\newcommand{\dOi}{\color{red}1?(?:X)201?}
%\lmcsheading{}{1--\pageref{LastPage}}{}{}%
%{\color{red}???.~??, 201?}{\color{red}???.~XX, 201?}{}

\def\dOi{13(3:3)2017}
\lmcsheading%
{\dOi}
{1--\pageref{LastPage}}
{}
{}
{Mar.~14, 2016}
{Jul.~\phantom06, 2017}
{}

%%% last changed 2008/10/30

%\usepackage{enumerate}
\usepackage{hyperref}\hypersetup{hidelinks}
%% read in additional TeX-packages or personal macros here:
%% e.g. \usepackage{xy}

\newcommand{\dotminus}{\buildrel\textstyle.\over{\hbox{\vrule height2pt depth0pt width0pt}{\smash-}}}

%% define non-standard environments here, for example
\theoremstyle{plain}

%% due to the dependence on amsart.cls, \begin{document} has to occur
%% BEFORE the title and author information:
\begin{document}

\title[Characterization Theorem for the Conditionally Computable Real Functions]{Characterization Theorem for the Conditionally Computable Real Functions}

\author[I.~Georgiev]{Ivan Georgiev}	%required
\address{Prof. D-r Asen Zlatarov University, bul. Prof. Yakimov 1, Burgas, Bulgaria}	%required
\email{ivandg@btu.bg}  %optional

%% mandatory lists of keywords and classifications:
\keywords{relative computability; conditionally computable real function; subrecursive classes.}
\subjclass{[\textbf{Theory of Computation}]: Models of Computation; [\textbf{Theory of Computation}]: Complexity Measures and Classes}
\titlecomment{This paper is based on an informal presentation with the same title that I gave at the CIE conference in 2014, Budapest and on another talk, presented in a conference in 2014, Gyolechitsa, dedicated to the memory of prof. Ivan Soskov.}
%%%%%%%%%%%%%%%%%%%%%%%%%%%%%%%%%%%%%%%%%%%%%%%%%%%%%%%%%%%%%%%%%%%%%%%%%%%

%% the abstract has to PRECEED the command \maketitle:
%% be sure not to issue the \maketitle command twice!

\begin{abstract}
  \noindent The class of uniformly computable real functions with respect to a small subrecursive class of operators computes the elementary functions of calculus, restricted to compact subsets of their domains. The class of conditionally computable real functions with respect to the same class of operators is a proper extension of the class of uniformly computable real functions and it computes the elementary functions of calculus on their whole domains. The definition of both classes relies on certain transformations of infinitistic names of real numbers. In the present paper, the conditional computability of real functions is characterized in the spirit of Tent and Ziegler, avoiding the use of infinitistic names.
\end{abstract}

\maketitle

%% start the paper here:
\section*{Introduction}\label{S:one}

This paper is in the field of computable analysis, where mathematical analysis meets classical computability theory. Some of the basic objects, which are studied in computable analysis are those real numbers and real functions, which can be computed using algorithms.

Alan Turing gave in \cite{turing} the first definition for the notion of computable real number -- this is a real number, whose decimal representation can be computed by a Turing machine. It is well-known that several other representations of real numbers lead to an equivalent notion, although the use of fast converging Cauchy sequences of rational numbers seems most natural when developing computable analysis.

Concerning computability of real functions, the most popular approach is TTE (type-2 theory of effectivity), which is built upon ideas from Grzegorczyk \cite{grzcomp} and Lacombe \cite{lacombe}. A naming system for the real numbers is chosen and then the real function is modelled as transforming arbitrary names of the arguments into a name of the value. This transformation can be realized, for example, by using Type-2 machines (a natural extension of Turing machines), as is described in Weihrauch's book \cite{weihrauch}, which is a popular introductory book on the subject.

The motivating question for our research is the following:

\emph{How does the restriction of the generality of the computable processes affect computable analysis?}

In other words, we are interested in studying the connection between computable analysis and complexity theory. This question is posed as an open problem number 1 in the monography \cite{pourelrich} of Pour-El and Richards. The restriction to the popular classes $P, NP, EXP,$ etc. from discrete complexity theory is relatively well-studied (for example, in Ko's book \cite{keriko}). But this is not the case with subrecursive hierarchies, such as Grzegorczyk's hierarchy of primitive recursive functions.

In the present paper we consider two notions for relative computability of real functions, which arose in the study of the subrecursive complexity of the elementary functions of calculus. The motivation behind their definition is to find a small subrecursive class of operators, such that all elementary functions
of calculus are uniformly or conditionally computable with respect to this class.

The first notion is \textit{uniform computability} of real functions with respect to a class of operators, which is a relativized version of Grzegorczyk's notion from \cite{grzcomp}. These operators model the action of the real function through a properly chosen naming system for the real numbers. Theorem 2 from Section 4 in \cite{grzcomp} shows that any real function, which is uniformly computable with respect to a class of computable operators, is uniformly continuous on the bounded subsets of its domain. It follows that this uniform notion is not suitable for computing the reciprocal and the logarithmic function on their whole domains.

This is the reason to consider the second notion, which is \textit{conditional computability} of real functions with respect to a class of operators. The additional feature, compared to the uniform computability, is that we allow the computation of the name of the real function's value to depend on a natural parameter $s$. The value of $s$ can be found by means of a search until some unary total function in the natural numbers reaches value $0$ for argument $s$. This function depends on the names of the arguments of the real function in a way that can be expressed through the class of operators.

In the paper \cite{tz}, the authors Tent and Ziegler follow a similar line of research and also consider two kinds of relative computability of real functions. But their approach does not rely on operators and names of real numbers and they work more directly with rational approximations of the arguments and the value of the real function.

For a class $\mathcal{F}$ of total functions in the natural numbers, satisfying certain natural properties, Tent and Ziegler define in \cite{tz} what it means for a real function with an open domain to be \textit{in $\mathcal{F}$} and to be \textit{uniformly in $\mathcal{F}$}. The first notion is more general than the second one. The difference between these two notions is similar to the difference between the uniform and the conditional computability -- for the broader notion, the approximation process can utilize an additional natural parameter, but its description uses the distance to the complement of the domain of the real function and does not use the class $\mathcal{F}$. Thus the means, provided by the class $\mathcal{F}$, might not be sufficient for computing the value of the parameter, if the domain of the real function is too complicated. As we note below, this is indicated by the fact that there exist incomputable real functions, which are in the class of all recursive functions.

It turns out that for the above classes of functions $\mathcal{F}$, the property of a real function to be uniformly in $\mathcal{F}$ is equivalent to the uniform computability of the real function with respect to a very natural class of operators -- the class of $\mathcal{F}$-substitutional operators, defined in Section 2.2 of \cite{m2}. This is the characterization theorem of Skordev, proven in \cite{chart}, and its more general version from \cite{chartgen}.

But this resemblance does not generalize for the broader notions. For example, let $\mathcal{L}^2$ be the class of lower elementary functions (this is the smallest class of total functions in the natural numbers, which contains the initial functions defined below and is closed under substitution and bounded summation). As noted in the end of the last section of \cite{skge}, the class of real functions in $\mathcal{L}^2$ is not closed under composition, it does not contain all elementary functions of calculus and it also contains incomputable real functions. On the other hand, none of these three unnatural properties is possessed by the class of conditionally computable real functions with respect to the $\mathcal{L}^2$-substitutional operators.

Our main purpose is to present another definition for conditional computability, in the style of Tent and Ziegler, for which we can generalize the characterization theorem. Of course, this definition will not possess the above-mentioned unwanted features. The proof of the generalization will use substantially the ideas from \cite{chartgen}.

\subsection*{On notation}
Throughout the paper $\mathbb{N}$ is the set of all natural numbers (the non-negative integers) and $\mathbb{R}$
is the set of all real numbers. For $m \in \mathbb{N}$ we denote by $\mathcal{T}_m$ the set \mbox{$\{f\,|\,f:\mathbb{N}^m\to\mathbb{N}\}$} of all
$m$-argument total functions in $\mathbb{N}$. Let $\mathcal{T}$ be the set of all total functions in $\mathbb{N}$, $\mathcal{T} = \bigcup_m\mathcal{T}_m$. Unless otherwise specified, a \emph{function} means a function from $\mathcal{T}$. We make distinction between two sorts of variables: $a, b, e, f, g, h$ (possibly with indices) range over functions from $\mathcal{T}_1$ and $x, y, z, s, t, k, m, n, p, q, r$ (possibly with indices) range over numbers from $\mathbb{N}$.
We also use shorthand notation $\vec{f},\vec{g},\vec{h}$ for tuples of functions from $\mathcal{T}_1$ and $\vec{x},\vec{y},\vec{z},\vec{s}$ for tuples of numbers from $\mathbb{N}$. The size of the tuples will always be clear from the context. 

For any $s\in\mathbb{N}$, we denote by $\hat{s}$ the unary constant function with value $s$, $\hat{s} = \lambda x.s$.

The \emph{initial functions} are the projections $\lambda x_1\ldots x_n.x_k$ for all $n,k$ with $1 \leq k \leq n$, the successor function $\lambda x.x+1$, the multiplication function $\lambda xy.xy$, the modified subtraction function $\lambda xy.x \dotminus y = \lambda xy.max(x-y,0)$ and the quotient function $\lambda xy.\left\lfloor\frac{x}{y+1}\right\rfloor$.

For any $k$ and any function $f\in\mathcal{T}_{k+1}$, we define the function $g\in\mathcal{T}_{k+1}$ by
\[  g(\vec{x},y) = \left\{ \begin{array}{rl}
          z & \mbox{if }\; z \leq y,\;\; f(\vec{x},z) = 0 \;\mbox{ and }\; \forall t < z [f(\vec{x},t) \neq 0], \\
          y+1 & \mbox{if }\; \forall t \leq y [f(\vec{x},t) \neq 0].
        \end{array}
\right.\]
We denote $g(\vec{x},y) = \mu_{z \leq y}[f(\vec{x},z) = 0]$ and we say that $g$ is produced from $f$ by \emph{bounded minimization}.

The function $g\in\mathcal{T}_m$ \emph{majorizes} the function $f\in\mathcal{T}_m$ (or $f$ \emph{is majorized by} $g$) if for all $\vec{x}\in\mathbb{N}^m$ we have $f(\vec{x}) \leq g(\vec{x})$.

For natural numbers $n$, the mappings $F : \mathcal{T}_1^n\to\mathcal{T}_1$ will be called \emph{$n$-operators}. An $operator$ is an $n$-operator for some $n$. We generally denote operators by capital letters $E, F, G, H$ (possibly with indices).

A triple of functions $(f,g,h)\in\mathcal{T}_1^3$ will be said to \emph{name} a real number $\xi$, if
$$ \left|\frac{f(n)-g(n)}{h(n)+1} - \xi\right| < \frac{1}{n+1} $$
for all $n\in\mathbb{N}$.

\section{Acceptable pairs}\label{S:two}

A central notion in \cite{chartgen} is the notion of acceptable pair. For our purposes we need to use a definition, which is a little stronger.

\begin{defi}\label{accpair}
Let $\mathcal{F} \subseteq \mathcal{T}$ be a class of functions and $\mathbf{O}$ be a class of operators. The pair $(\mathcal{F},\mathbf{O})$ will be called \emph{acceptable}, if the following conditions hold:
\begin{enumerate}
 \item The initial functions belong to $\mathcal{F}$.
 \item The class $\mathcal{F}$ is closed under substitution and bounded minimization.
 \item All operators in $\mathbf{O}$ are continuous, that is for any $n$-operator $F \in \mathbf{O}$, functions $f_1,\ldots,f_n\in\mathcal{T}_1$ and $x$, there exists a natural number $z$, such that 
$$ F(\vec{g})(x) = F(\vec{f})(x), $$ 
whenever $g_1,\ldots,g_n\in\mathcal{T}_1$ and $g_1(t) = f_1(t),\,\ldots,\,g_n(t) = f_n(t)$ for all $t \leq z$.
 \item For any $n$, the $n$-operator $F$ defined by $F(\vec{f})(x) = x$ belongs to $\mathbf{O}$.
 \item For any $n$ and $k\in\{1,\ldots,n\}$, if the $n$-operator $F_0$ belongs to $\mathbf{O}$, then so does the $n$-operator $F$ defined by
$$ F(\vec{f})(x) = f_k(F_0(\vec{f})(x)). $$
 \item For any $m,n$ and function $a\in\mathcal{T}_m\cap\mathcal{F}$, if $F_1,\ldots,F_m$ are $n$-operators belonging to $\mathbf{O}$, then so is the $n$-operator $F$ defined by
$$ F(\vec{f})(x) = a(F_1(\vec{f})(x),\ldots,F_m(\vec{f})(x)). $$
 \item The class $\mathbf{O}$ is closed under composition of operators, that is if $F$ is a $k$-operator from $\mathbf{O}$ and $G_1,\ldots,G_k$ are $n$-operators all belonging to $\mathbf{O}$, then the $n$-operator $H$ defined by
 $$ H(\vec{f}) = F(G_1(\vec{f}),\ldots,G_k(\vec{f})) $$
also belongs to $\mathbf{O}$.
 \item For any $m,n$, whenever $f_1,\ldots,f_n \in \mathcal{T}_{m+1}\cap\mathcal{F}$ and $F$ is an $n$-operator from $\mathbf{O}$, the function $a\in\mathcal{T}_{m+1}$ defined by
\begin{align*}
 a(\vec{s},x) = F(\lambda t.f_1(\vec{s},t), \ldots, \lambda t.f_n(\vec{s},t))(x)
\end{align*}
belongs to $\mathcal{F}$.
 \item (\emph{uniformity condition}) For any $n$ and $n$-operator $F \in \mathbf{O}$, there exists a $1$-operator $\Omega \in \mathbf{O}$, such that for any $x \in \mathbb{N}$ and any monotonically increasing $g \in \mathcal{T}_1$, if the unary functions $f_1,\ldots,f_n,f_1',\ldots,f_n'$ are majorized by $g$ and 
$$f_1(t) = f_1'(t),\ldots,f_n(t) = f_n'(t) $$ for all $t \leq \Omega(g)(x)$, then
$$ F(f_1,\ldots,f_n)(x) = F(f_1',\ldots,f_n')(x). $$
\end{enumerate}
\end{defi}

\noindent The differences from the definition in \cite{chartgen} are the following: we assume additionally that the class of functions $\mathcal{F}$ is closed under bounded minimization, the class $\mathbf{O}$ consists of continuous operators only and it is closed under composition of operators. The important thing is that if $(\mathcal{F},\mathbf{O})$ is acceptable according to \ref{accpair}, then $(\mathcal{F},\mathbf{O})$ is also acceptable in the sense of the definition from \cite{chartgen}. This allows us to use the results from \cite{chartgen}.

The least class of functions that satisfies conditions (1) and (2) from Definition \ref{accpair} is the class $\mathcal{M}^2$. It is not hard to see that a function $f$ belongs to $\mathcal{M}^2$ if and only if the graph of $f$ is $\Delta_0$-definable and $f$ is majorized by a polynomial. It is known that $\mathcal{M}^2 \subseteq \mathcal{L}^2$, but whether this inclusion is proper is an open problem.

Let $\mathcal{F}$ be a class, satisfying (1) and (2) from Definition \ref{accpair}. Of course, $\mathcal{M}^2 \subseteq \mathcal{F}$. It is easy to see that $\mathcal{F}$ is closed under bounded minimum and bounded maximum operators, that is for any function $f \in \mathcal{T}_{k+1}\cap\mathcal{F}$, the functions
$$ \lambda\vec{x}y.\min_{z \leq y}f(\vec{x},z),\;\;\; \lambda\vec{x}y.\max_{z \leq y}f(\vec{x},z) $$
also belong to $\mathcal{F}$.

For an arbitrary class of functions $\mathcal{F}$, we denote by $\mathbf{O}_\mathcal{F}$ the least class of operators, which satisfies conditions (4), (5) and (6) from Definition \ref{accpair}. This class coincides with the class of $\mathcal{F}$-substitutional operators, defined in \cite{m2}. An $n$-operator $F$ is $\mathcal{F}$-substitutional if and only if there exists a term for $F(f_1,\ldots,f_n)(x)$, built from the variable $x$, the function symbols $f_1,\ldots,f_n$ and some function symbols for functions from $\mathcal{F}$. Of course, if $(\mathcal{F},\mathbf{O})$ is an acceptable pair, then $\mathbf{O}_\mathcal{F} \subseteq \mathbf{O}$.

Examples for acceptable pairs are given in \cite{chartgen} (it is easy to see that they satisfy the stronger Definition \ref{accpair}). These are the pairs $(\mathcal{F},\mathbf{O})$, where:
\begin{itemize}
 \item $\mathcal{F}$ is the class of all recursive functions and $\mathbf{O}$ is the class of all computable operators;
 \item $\mathcal{F}$ is the class of all primitive recursive functions and $\mathbf{O}$ is the class of all primitive recursive operators;
 \item $\mathcal{F}$ is the class of all functions, which are elementary in Kalm\'ar's sense and $\mathbf{O}$ is the class of all elementary operators.
\end{itemize}

Another family of acceptable pairs with least possible second component is given by the following proposition.

\begin{prop}\label{propac} Let $\mathcal{F}$ be a class of functions, which satisfies conditions (1) and (2) from Definition \ref{accpair}. Then the pair $(\mathcal{F},\mathbf{O}_\mathcal{F})$ is acceptable.
\end{prop}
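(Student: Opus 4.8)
The plan is to verify the nine conditions of Definition \ref{accpair} for the pair $(\mathcal{F},\mathbf{O}_\mathcal{F})$. Conditions (1) and (2) are precisely the hypotheses on $\mathcal{F}$, and conditions (4), (5) and (6) hold automatically, since $\mathbf{O}_\mathcal{F}$ was defined as the \emph{least} class of operators closed under these three formation rules. Thus only the continuity condition (3), the closure under composition (7), the ``term evaluation'' condition (8) and the uniformity condition (9) require proof, and each I would establish by structural induction following the generation of $\mathbf{O}_\mathcal{F}$ from the rules (4)--(6) (equivalently, by induction on the term witnessing that an operator is $\mathcal{F}$-substitutional).

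For (3), (7) and (8) I would argue along the term characterization recalled above. Continuity (3) is immediate by induction: the base operator $F(\vec f)(x)=x$ ignores its function arguments, and the rules (5), (6) only compose finitely many continuous operators, so a modulus for $F(\vec f)(x)$ is obtained as the largest of the moduli of the subterms (in the case of (5), also accounting for the computed inner value $F_0(\vec f)(x)$). For closure under composition (7), the idea is to substitute, inside the term defining $F(f_1,\ldots,f_k)(x)$, the term defining $G_i(\vec f)$ at every place where the symbol $f_i$ is applied; formally this is handled by an auxiliary induction showing that for $G,H_0\in\mathbf{O}_\mathcal{F}$ the operator $\vec f\mapsto\lambda x.G(\vec f)(H_0(\vec f)(x))$ again lies in $\mathbf{O}_\mathcal{F}$, after which the three generation cases for $F$ close under (4), (5) and (6) respectively. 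For (8), substituting $g_i:=\lambda t.f_i(\vec s,t)$ into the defining term turns every application $g_i(\sigma)$ into $f_i(\vec s,\sigma)$ with $f_i\in\mathcal{F}$, so the resulting term for $a(\vec s,x)$ is built solely from $\vec s$, $x$ and function symbols for functions of $\mathcal{F}$; closure of $\mathcal{F}$ under substitution (condition (2)) then gives $a\in\mathcal{F}$.

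The main obstacle is the uniformity condition (9), because the modulus $\Omega$ must work uniformly for \emph{all} tuples majorized by a given monotone $g$, not merely for one fixed tuple. To obtain this I would prove, by a single simultaneous induction over the generation of $F\in\mathbf{O}_\mathcal{F}$, the existence of two $1$-operators in $\mathbf{O}_\mathcal{F}$: a uniform modulus $\mu_F$ and an output bound $\nu_F$ satisfying $F(\vec f)(x)\le\nu_F(g)(x)$ whenever the $f_i$ are majorized by the monotone $g$. The bound $\nu_F$ is exactly what is needed to handle rule (5): there $F(\vec f)(x)=f_k(F_0(\vec f)(x))$, and in order to pin down $f_k$ at the point $F_0(\vec f)(x)$ one must bound this inner value uniformly, which is provided by $\nu_{F_0}(g)(x)$. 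Concretely, in the case (5) one may take $\nu_F(g)(x)=g(\nu_{F_0}(g)(x))$, which lies in $\mathbf{O}_\mathcal{F}$ by rule (5) applied to the single argument $g$, together with $\mu_F(g)(x)=\max(\mu_{F_0}(g)(x),\nu_{F_0}(g)(x))$; in the case (6), with $F(\vec f)(x)=a(F_1(\vec f)(x),\ldots,F_m(\vec f)(x))$, one takes $\mu_F$ to be the maximum of the $\mu_{F_j}$ and bounds the output by $\bar a$ applied to the maximum of the $\nu_{F_j}$, where $\bar a(z)=\max_{y_1,\ldots,y_m\le z}a(\vec y)$ is the monotonization of $a$, which belongs to $\mathcal{F}$ by its closure under the bounded maximum operator.

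Throughout, the verification that the constructed operators stay in $\mathbf{O}_\mathcal{F}$ relies only on rules (4), (5), (6) together with the facts that $\max$ and the monotonizations $\bar a$ of functions $a\in\mathcal{F}$ are themselves in $\mathcal{F}$ (indeed already in $\mathcal{M}^2$); setting $\Omega:=\mu_F$ then witnesses condition (9). I expect the bookkeeping in the simultaneous induction for (9) -- keeping the modulus and the bound mutually consistent across the three formation rules -- to be the only delicate part, the remaining conditions reducing to routine inductions on term structure.
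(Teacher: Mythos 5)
Your proposal is correct, but it takes a genuinely different route from the paper: you reprove from scratch what the paper obtains largely by citation. The paper's entire proof is three sentences: since $\mathcal{F}$ is closed under bounded maximum, every function in $\mathcal{F}$ is majorized by a function in $\mathcal{F}$ that is increasing in all arguments; this verifies the hypothesis of Theorem 1 of \cite{chartgen}, which yields that $(\mathcal{F},\mathbf{O}_\mathcal{F})$ is acceptable in the weaker sense of \cite{chartgen} (covering in particular conditions (8) and (9) of Definition \ref{accpair}); and the two conditions that are new in Definition \ref{accpair} relative to \cite{chartgen}, namely continuity (3) and closure under composition (7), follow by straightforward induction on $F$. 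Your inductions for (3), (7) and (8) are precisely those ``straightforward'' inductions, and your auxiliary lemma for (7) --- showing that $\vec{f}\mapsto\lambda x.G(\vec{f})(H_0(\vec{f})(x))$ stays in $\mathbf{O}_\mathcal{F}$, by a nested induction on $G$ --- is the right way to organize that argument. Your simultaneous induction for the uniformity condition (9), carrying a modulus $\mu_F$ together with an output bound $\nu_F$, with $\nu_F(g)(x)=g(\nu_{F_0}(g)(x))$ and $\mu_F(g)(x)=\max\bigl(\mu_{F_0}(g)(x),\nu_{F_0}(g)(x)\bigr)$ in case (5) and the monotonization $\bar{a}$ in case (6), is essentially a reconstruction of the content of the theorem the paper cites; note that the monotonization plays in your induction exactly the role that the majorization hypothesis plays in the paper's appeal to \cite{chartgen}. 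What the paper's approach buys is brevity and reuse of an existing result; what yours buys is a self-contained argument that makes explicit where each closure property of $\mathcal{F}$ (substitution, bounded maximum) enters. One small slip: your parenthetical claim that the monotonizations $\bar{a}$ of functions $a\in\mathcal{F}$ lie ``already in $\mathcal{M}^2$'' is wrong in general --- that is true of $\max$, but $\bar{a}$ majorizes $a$, so for $a\notin\mathcal{M}^2$ it cannot lie in $\mathcal{M}^2$; fortunately your argument only needs $\bar{a}\in\mathcal{F}$, which bounded-maximum closure provides.
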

\begin{proof}
Since $\mathcal{F}$ is closed under bounded maximum, every function in $\mathcal{F}$ is majorized by a function in $\mathcal{F}$, which is increasing with respect to all of its arguments. From Theorem 1 in \cite{chartgen}, the pair $(\mathcal{F},\mathbf{O}_\mathcal{F})$ is acceptable in the sense of \cite{chartgen}. It remains to note that straightforward proofs by induction on $F$ show that every operator $F \in \mathbf{O}_\mathcal{F}$ is continuous and that condition (7) from \ref{accpair} holds for the class $\mathbf{O}_\mathcal{F}$.
\end{proof}

As noted in \cite{chartgen}, for the three examples above we have $\mathbf{O}_\mathcal{F} \subseteq \mathbf{O}$, but $\mathbf{O}_\mathcal{F} \neq \mathbf{O}$. Thus the class $\mathcal{F}$ in these examples is the first component of at least two different acceptable pairs. On the other hand, a remark in the end of Section 3 in \cite{chartgen} shows that if $(\mathcal{F}_1,\mathbf{O})$ and $(\mathcal{F}_2,\mathbf{O})$ are acceptable pairs, then $\mathcal{F}_1 = \mathcal{F}_2$.

\section{Uniform computability of real functions}

The formal notion for computability of real functions is based on operators, acting on names of real numbers. We use the definition for computing system from \cite{m2}.

\begin{defi}\label{compsys}
Let $k \in \mathbb{N}$ and $\theta : D \rightarrow \mathbb{R}$, where $D \subseteq \mathbb{R}^k$, be a real function.
The triple $(F,G,H)$, where $F, G, H$ are $3k$-operators, is called a \emph{computing system} for $\theta$ if for all $(\xi_1,\xi_2,\ldots,\xi_k) \in D$ and triples $(f_i,g_i,h_i)$ that name $\xi_i$ for $i = 1, 2, \ldots, k$, the triple
$$ (F(f_1,g_1,h_1,f_2,g_2,h_2,\ldots,f_k,g_k,h_k),$$
 $$ G(f_1,g_1,h_1,f_2,g_2,h_2,\ldots,f_k,g_k,h_k),$$
$$ H(f_1,g_1,h_1,f_2,g_2,h_2,\ldots,f_k,g_k,h_k)) $$
names the real number $\theta(\xi_1,\xi_2,\ldots,\xi_k).$
\end{defi}

\begin{defi}\label{uniform}
Let $\mathbf{O}$ be a class of operators. A real function $\theta$ will be called \emph{uniformly} $\mathbf{O}$-\emph{computable}, if there exists a computing system $(F,G,H)$ for $\theta$, such that $F, G, H \in \mathbf{O}$.
\end{defi}

If $\mathbf{O}$ is the class of all computable operators, then a real function $\theta$ is uniformly $\mathbf{O}$-computable if and only if it is computable in the sense of Grzegorczyk from \cite{grzcomp}. But this is not the most general notion for a computable real function. In order for Definition \ref{compsys} to make sense, the operators $F, G, H$ must be defined only on tuples of names for the arguments of the real function $\theta$. By allowing partial operators we obtain the generally accepted notion for a computable real function, which we will call \emph{computability in the extended sense}.

Let $\mathcal{F}$ be a class of recursive functions. Then it is easy to see that all operators in $\mathbf{O}_\mathcal{F}$ are computable and it follows that the uniformly $\mathbf{O}_\mathcal{F}$-computable real functions are computable in Grzegorczyk's sense. In fact, these are exactly the uniformly $\mathcal{F}$-computable real functions, considered in \cite{m2}.

The results from \cite{m2} imply that all elementary functions of calculus are uniformly $\mathbf{O}_{\mathcal{M}^2}$-computable, but restricted to compact subsets of their domains. One general reason for this restriction was already noted in the introduction -- the reciprocal and the logarithmic functions cannot be computable in Grzegorczyk's sense on $(0,1)$, since they are not uniformly continuous there. Another reason is indicated in Section 2.2 in \cite{m2} -- namely that any uniformly $\mathbf{O}_{\mathcal{M}^2}$-computable real function is bounded by some polynomial (this is due to the fact that the functions in $\mathcal{M}^2$ have polynomial growth). Thus the exponential function cannot be uniformly $\mathbf{O}_{\mathcal{M}^2}$-computable on its whole domain.

Next we present the definition for uniform computability in the style of Tent and Ziegler from \cite{tz}.

\begin{defi}\label{tzuniform}
Let $\mathcal{F} \subseteq \mathcal{T}$ be a class of functions. The real function $\theta : D \rightarrow \mathbb{R}$, where $D \subseteq \mathbb{R}^k$ for some $k\in\mathbb{N}$, will be called \emph{TZ-style uniformly $\mathcal{F}$-computable}, if there exist functions $d \in \mathcal{T}_1 \cap \mathcal{F}$ and $f,g,h \in \mathcal{T}_{3k+1} \cap \mathcal{F}$, such that for all $(\xi_1,\ldots,\xi_k) \in D$ and $p_1, q_1, r_1, \ldots, p_k, q_k, r_k, t \in \mathbb{N}$, the inequalities
$$ |\xi_i| \leq t+1,\;\;\; \left| \frac{p_i - q_i}{r_i + 1} - \xi_i \right| < \frac{1}{d(t)+1}\;\; (i = 1, \ldots, k) $$
imply that the numbers
$$ p = f(p_1,q_1,r_1,\ldots,p_k,q_k,r_k,t),\;\;\;\; q = g(p_1,q_1,r_1,\ldots,p_k,q_k,r_k,t), $$
$$ r = h(p_1,q_1,r_1,\ldots,p_k,q_k,r_k,t) $$
satisfy the inequality
$$ \left| \frac{p-q}{r+1} - \theta(\xi_1,\ldots,\xi_k) \right| < \frac{1}{t+1}. $$
\end{defi}

If $\mathcal{F}$ is a good class in the sense of \cite{tz} and $\theta : D \rightarrow \mathbb{R}$ is a real function with an open domain $D$, then $\theta$ is TZ-style uniformly $\mathcal{F}$-computable if and only if it is uniformly in $\mathcal{F}$ in the sense of \cite{tz}.

We are now ready to formulate the general characterization theorem of Skordev.

\begin{thm}[Skordev, \cite{chartgen}] Let $(\mathcal{F},\mathbf{O})$ be an acceptable pair, $k$ be a natural number and $\theta : D \rightarrow \mathbb{R}$, where $D \subseteq \mathbb{R}^k$ be a real function. Then $\theta$ is uniformly $\mathbf{O}$-computable if and only if $\theta$ is TZ-style uniformly $\mathcal{F}$-computable.
\end{thm}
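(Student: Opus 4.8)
The plan is to establish the two implications separately. The direction from TZ-style uniform $\mathcal{F}$-computability to uniform $\mathbf{O}$-computability is the routine one: I would build the operators directly from the witnessing functions $d,f,g,h\in\mathcal{F}$, using only that $\mathbf{O}_\mathcal{F}\subseteq\mathbf{O}$ holds for every acceptable pair. Given names $(f_i,g_i,h_i)$ of the arguments $\xi_i$ and a precision index $x$, I would extract a combined bound/precision parameter $t:=\max(x,\max_i(f_i(0)+g_i(0)))$; the name inequality at index $0$ forces $|\xi_i|<t+1$, and $t\ge x$ keeps the output precision sufficient. Reading off the single approximation $p_i:=f_i(d(t))$, $q_i:=g_i(d(t))$, $r_i:=h_i(d(t))$ makes the TZ hypotheses hold, so $F(\vec f,\vec g,\vec h)(x):=f(f_1(d(t)),\dots,t)$ and its analogues $G,H$ return a correct approximation. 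Each of these is manifestly $\mathcal{F}$-substitutional: $t$ and $d(t)$ are assembled from the identity operator (condition (4)), the values $f_i(0)$ (condition (5) with a constant inner operator), and the $\mathcal{F}$-functions $\max,+,d$ (condition (6)); then $f_i(d(t))$ is one more use of (5) and the outer $f$ one more use of (6), so all three lie in $\mathbf{O}_\mathcal{F}\subseteq\mathbf{O}$.

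The converse is the substantial direction. Given a computing system $(F,G,H)$ with $F,G,H\in\mathbf{O}$, I must manufacture $d,f,g,h\in\mathcal{F}$. The governing idea is that, by the uniformity condition (9), the value $F(\cdot)(t)$ at precision index $t$ is determined, among inputs majorized by a fixed monotone majorant, by only an initial segment of its arguments. I would fix the polynomial majorant $G(t,j):=(t+3)(j+1)$, take a common $\Omega^{*}\in\mathbf{O}$ dominating the operators from (9) for $F,G,H$ (combined via condition (6) with $\max$), and set $N(t):=\Omega^{*}(\lambda j.G(t,j))(t)$, which lies in $\mathcal{F}$ by condition (8). Choosing $d(t):=2N(t)+1$ guarantees that the supplied value $v_i=\frac{p_i-q_i}{r_i+1}$, accurate to $\frac{1}{d(t)+1}$, is in fact accurate to within $\frac{1}{2(j+1)}$ for every $j\le N(t)$.

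From $v_i$ I fabricate bounded near-name values that are in $\mathcal{F}$: set $\tilde h_i(\vec s,j):=j$ and let $\tilde f_i-\tilde g_i$ be the nearest integer to $v_i(j+1)$, split into its non-negative and non-positive parts via $\dotminus$ and realised through the quotient function, so that for $j\le N(t)$ the rounding error $\le\frac{1}{2(j+1)}$ combines with the error of $v_i$ to stay strictly below $\frac{1}{j+1}$, while every value stays $\le G(t,j)$. The crucial point is that these fabricated names need not be valid names of $\xi_i$ beyond $N(t)$: since $|\xi_i|\le t+1$ there is a genuine bounded name $(f_i^{*},g_i^{*},h_i^{*})$ of $\xi_i$ (denominator $j+1$, numerator the nearest integer to $\xi_i(j+1)$) that is majorized by $\lambda j.G(t,j)$ and agrees with the fabricated one on $[0,N(t)]$. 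Condition (9) then gives $F(\tilde f,\dots)(t)=F(f^{*},\dots)(t)$, and since $(F,G,H)$ is a computing system on the genuine name, the resulting triple approximates $\theta(\vec\xi)$ to within $\frac{1}{t+1}$. I would finally define $f(\vec s):=F(\lambda j.\tilde f_1(\vec s,j),\dots)(t)$ with $t$ the last coordinate of $\vec s$, and likewise $g,h$; condition (8) puts $F(\lambda j.\tilde f_1(\vec s,j),\dots)(x)$ in $\mathcal{F}$, and substituting $x=t$ keeps it there.

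The main obstacle is exactly this converse, together with the key realization that one must not try to reconstruct a globally valid name of $\xi_i$ from a single approximation, which is impossible inside $\mathcal{F}$, but should instead use uniformity to substitute the fabricated name for a genuine one on the finite region of dependence only. The delicate bookkeeping is to order the choices correctly (first the majorant $G$, then $\Omega^{*}$ and $N$, then $d$ large enough to beat both the dependence bound $N(t)$ and the rounding loss) and to verify that each fabricated component, together with $N$ and $d$, genuinely lands in $\mathcal{F}$ using closure under substitution, $\dotminus$ and the quotient function.
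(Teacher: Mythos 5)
The paper itself states this theorem without proof, citing Skordev's \cite{chartgen}; the closest argument in the paper is its proof of Theorem \ref{charthcond}, and your proposal follows essentially that same route: the easy direction by reading off $p_i=f_i(d(t))$ at $t=\max(x,\dots)$ inside an $\mathcal{F}$-substitutional operator, and the hard direction via the uniformity condition with a polynomial majorant, a region-of-dependence bound $N(t)$ placed in $\mathcal{F}$ by condition (8), fabricated rounded names (your rounding is exactly the paper's $ehelp$), and a genuine name agreeing with the fabricated one on the region of dependence. One slip in the write-up: as literally described, your genuine name $(f_i^{*},g_i^{*},h_i^{*})$ — numerator the nearest integer to $\xi_i(j+1)$ for every $j$ — need not agree with the fabricated name on $[0,N(t)]$, since when $v_i(j+1)$ and $\xi_i(j+1)$ straddle a half-integer the two roundings differ. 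The correct object (and the one the paper uses, its $\overline{f_1^0},\overline{g_1^0}$) is the hybrid that \emph{equals} the fabricated name on $[0,N(t)]$ and rounds $\xi_i(j+1)$ beyond; it is a genuine name precisely because of the accuracy bound $<\frac{1}{j+1}$ you already established on that initial segment, so this is a one-line repair rather than a structural gap.
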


\begin{cor}[Skordev, \cite{chart}] Let $\mathcal{F}$ be a class of functions, satisfying conditions (1) and (2) from Definition \ref{accpair}. Then a real function is uniformly $\mathbf{O}_\mathcal{F}$-computable if and only if it is TZ-style uniformly $\mathcal{F}$-computable.
\end{cor}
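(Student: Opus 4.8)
The plan is to observe that this statement is simply the specialization of the preceding general characterization theorem to the case where the second component of the acceptable pair is chosen as small as possible, namely $\mathbf{O} = \mathbf{O}_\mathcal{F}$. So the entire content of the proof lies in verifying that the hypotheses of the general theorem are met, and this is exactly what Proposition \ref{propac} delivers.

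Concretely, I would proceed as follows. First, since $\mathcal{F}$ satisfies conditions (1) and (2) from Definition \ref{accpair} by assumption, I apply Proposition \ref{propac} to conclude that the pair $(\mathcal{F}, \mathbf{O}_\mathcal{F})$ is acceptable. Recall that $\mathbf{O}_\mathcal{F}$ was defined as the least class of operators satisfying conditions (4), (5) and (6); Proposition \ref{propac} is what guarantees that this least class additionally satisfies the remaining requirements of Definition \ref{accpair}, in particular continuity of every operator, closure under composition, and the uniformity condition (9). Having established that $(\mathcal{F}, \mathbf{O}_\mathcal{F})$ is an acceptable pair, I then invoke the general characterization theorem of Skordev with $\mathbf{O} := \mathbf{O}_\mathcal{F}$. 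For an arbitrary real function $\theta : D \to \mathbb{R}$ with $D \subseteq \mathbb{R}^k$, that theorem yields that $\theta$ is uniformly $\mathbf{O}_\mathcal{F}$-computable if and only if $\theta$ is TZ-style uniformly $\mathcal{F}$-computable, which is precisely the claimed equivalence.

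There is no genuine obstacle in this argument: the corollary is a clean instantiation, and all the substantive work has already been carried out in Proposition \ref{propac} and in the general theorem. The only point deserving a moment of care is to confirm that the class $\mathcal{F}$ appearing in the corollary is indeed the first component of the acceptable pair supplied to the theorem, which holds by construction. I would therefore keep the proof to a single sentence or two, simply citing Proposition \ref{propac} to produce the acceptable pair and then applying the general characterization theorem.
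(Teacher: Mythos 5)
Your proposal is correct and matches the paper's proof exactly: the paper also derives the corollary by applying Proposition \ref{propac} to obtain the acceptable pair $(\mathcal{F},\mathbf{O}_\mathcal{F})$ and then instantiating the general characterization theorem with $\mathbf{O}=\mathbf{O}_\mathcal{F}$. Your elaboration of why Proposition \ref{propac} supplies the needed hypotheses is just a spelled-out version of the paper's one-line argument.
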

\begin{proof} We apply Proposition \ref{propac}. \end{proof}
\begin{cor}\label{coruni} For an acceptable pair $(\mathcal{F},\bf{O})$ and a real function $\theta$, the following are equivalent:
	\begin{itemize}
	 \item $\theta$ is uniformly $\mathbf{O}$-computable,
	 \item $\theta$ is TZ-style uniformly $\mathcal{F}$-computable,
	 \item $\theta$ is uniformly $\mathbf{O}_\mathcal{F}$-computable.
  \end{itemize}
	\end{cor}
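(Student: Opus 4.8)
The plan is to obtain the three-way equivalence by simply chaining together the two results that immediately precede it, with no new work beyond a small verification of hypotheses. The key observation is that since $(\mathcal{F},\mathbf{O})$ is acceptable, the class $\mathcal{F}$ satisfies conditions (1) and (2) of Definition \ref{accpair} by the very definition of acceptability, and this is exactly the hypothesis needed to invoke both the general characterization theorem and its first corollary.

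First I would apply the general characterization theorem of Skordev (from \cite{chartgen}) directly to the acceptable pair $(\mathcal{F},\mathbf{O})$. This yields at once the equivalence of the first two items: $\theta$ is uniformly $\mathbf{O}$-computable if and only if $\theta$ is TZ-style uniformly $\mathcal{F}$-computable. It is worth noting that TZ-style uniform $\mathcal{F}$-computability, as given in Definition \ref{tzuniform}, depends only on the class $\mathcal{F}$ and not on the particular second component of the acceptable pair, which is why it can serve as the common link between the two outer items.

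Next I would invoke the first corollary above (Skordev, \cite{chart}), whose hypothesis that $\mathcal{F}$ satisfies (1) and (2) is already secured. It gives the equivalence of the second and third items: $\theta$ is uniformly $\mathbf{O}_\mathcal{F}$-computable if and only if $\theta$ is TZ-style uniformly $\mathcal{F}$-computable. Since the middle item is shared by both equivalences, transitivity immediately delivers the full cycle of three equivalent statements.

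There is essentially no obstacle here, and no routine calculation to carry out: the statement is a bookkeeping consequence of the two preceding results, the only point to check being that acceptability of $(\mathcal{F},\mathbf{O})$ guarantees the conditions on $\mathcal{F}$ required by both. The mathematical content resides entirely in the general theorem (which in turn rests on \cite{chartgen} and, via Proposition \ref{propac}, on the treatment of $\mathbf{O}_\mathcal{F}$); this corollary merely records that the three a priori distinct notions of uniform computability collapse into one for an arbitrary acceptable pair.
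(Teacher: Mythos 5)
Your proposal is correct and is exactly the argument the paper intends: the corollary follows by chaining the general characterization theorem (applied to the acceptable pair $(\mathcal{F},\mathbf{O})$) with the preceding corollary for $\mathbf{O}_\mathcal{F}$, whose hypothesis on $\mathcal{F}$ is guaranteed by conditions (1) and (2) of Definition \ref{accpair}. Your observation that TZ-style uniform $\mathcal{F}$-computability depends only on $\mathcal{F}$, making it the common link, is precisely why the paper states this corollary without further proof.
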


\section{Conditional computability of real functions}

In this section we present the notion of conditionally computable real function.

\begin{defi}\label{defcondo}
Let $\mathbf{O}$ be a class of operators, $k$ be a natural number and $\theta:D\to\mathbb{R}$ be a real function, where $D\subseteq\mathbb{R}^k$. Then $\theta$ will be called \emph{conditionally $\mathbf{O}$-computable}, if there exist $3k$-operator $E$ and $(3k+1)$-operators $F,G,H$, all belonging to $\mathbf{O}$, such that whenever $(\xi_1,\ldots,\xi_k)\in D$ and $(f_1,g_1,h_1),\,\ldots,\,(f_k,g_k,h_k)$ are triples that name $\xi_1,\,\ldots,\,\xi_k$, respectively, the following holds:
\begin{enumerate}
\item There exists a natural number $s$, satisfying the equality
\begin{equation}\label{E2}
E(f_1,g_1,h_1,\ldots,f_k,g_k,h_k)(s) = 0.
\end{equation}
\item For any natural number $s$, which satisfies (\ref{E2}), the triple
\begin{align*}
(F(f_1,g_1,h_1,\ldots,f_k,g_k,h_k,\widehat{s}),\\G(f_1,g_1,h_1,\ldots,f_k,g_k,h_k,\widehat{s}),\\H(f_1,g_1,h_1,\ldots,f_k,g_k,h_k,\widehat{s}))
\end{align*}
names the real number $\theta(\xi_1,\ldots,\xi_k)$.
\end{enumerate}
\end{defi}

\noindent This is not the exact form of the original notion, that is used in Definition 2 in \cite{skge}. The dependence on the value of $s$ in the approximation process in \cite{skge} is realized by adding $s$ as a natural argument of the functions, which are the values of the operators $F, G, H$. In the present definition, we use the approach from Definition 3.2 in \cite{gesk}, where the constant function $\widehat{s}$ is added to the arguments of $F, G, H$. This is the reason why the operators $F, G, H$ have one argument more than the operator $E$, which is indicated in the definitions below by denoting this extra argument as $a$.

If $\mathbf{O}$ consists of computable operators, then the conditionally $\mathbf{O}$-computable real functions are computable in the extended sense, but generally not in Grzegorczyk's sense. The search for a value of $s$, which satisfies equality (\ref{E2}) might not be successful, if the arguments of $E$ are not tuples of names for real numbers from the domain $D$ of $\theta$.

Let $\mathcal{F}$ be a class of recursive functions. Then the operators in $\mathbf{O}_\mathcal{F}$ are computable and therefore the conditionally $\mathbf{O}_\mathcal{F}$-computable real functions are computable in the extended sense. In fact, these are exactly the conditionally $\mathcal{F}$-computable real functions, firstly considered in \cite{skge}, as follows from the $m = 1$ case of Lemma 2.4 in \cite{gesk}.

\begin{exa} Let $\mathbf{O}$ be a class of operators, which is the second component of some acceptable pair $(\mathcal{F},\mathbf{O})$. Then all uniformly $\mathbf{O}$-computable real functions are conditionally $\mathbf{O}$-computable. Indeed, let $\theta$ be a real function, $\theta : D \to \mathbb{R}, D \subseteq \mathbb{R}^k, k \in \mathbb{N}$ and $(F^\circ,G^\circ,H^\circ)$ be a computing system for $\theta$, which consists of operators, belonging to $\mathbf{O}$. Then we can satisfy the requirements of Definition \ref{defcondo} through the operators $E, F, G, H$, defined by
\begin{align*}
E(f_1,g_1,h_1,\ldots,f_k,g_k,h_k) = &\,\mathrm{id}_\mathbb{N},\\
F(f_1,g_1,h_1,\ldots,f_k,g_k,h_k,a)\! = &\,F^\circ(f_1,g_1,h_1,\ldots,f_k,g_k,h_k),\\
G(f_1,g_1,h_1,\ldots,f_k,g_k,h_k,a)\! = &\,G^\circ(f_1,g_1,h_1,\ldots,f_k,g_k,h_k),\\
H(f_1,g_1,h_1,\ldots,f_k,g_k,h_k,a)\! = &\,H^\circ(f_1,g_1,h_1,\ldots,f_k,g_k,h_k).
\end{align*}
The operator $E$ belongs to $\mathbf{O}$ from condition (4) of \ref{accpair}. Since the operators $F^\circ,G^\circ,H^\circ$ belong to $\mathbf{O}$, conditions (4), (5) and (7) from \ref{accpair} imply that $F,G,H$ also belong to $\mathbf{O}$.
\end{exa}

The connections between uniform $\mathbf{O}$-computability and conditional $\mathbf{O}$-computability of real functions are further investigated in the paper \cite{gesk}. For classes of operators $\mathbf{O}$, which satisfy certain natural properties, it is shown in \cite{gesk} that:
\begin{itemize}
 \item substitution of real functions preserves conditional $\mathbf{O}$-computability;
 \item all conditionally $\mathbf{O}$-computable real functions are locally uniformly $\mathbf{O}$-computable;
 \item the conditionally $\mathbf{O}$-computable real functions with compact domains are uniformly $\mathbf{O}$-computable.
\end{itemize}
All elementary functions of calculus are conditionally $\mathbf{O}_{\mathcal{M}^2}$-computable on their whole domains (Corollary 1 in \cite{skge}). Roughly speaking, for the reciprocal and the logarithmic function, the parameter $s$ can be used to isolate the argument from $0$. For the exponential function, the parameter $s$ can be used to compute an upper bound of its value, using the fact that the graph of $\lambda xy.x^y$ (belonging to $\mathcal{T}_2$) is $\Delta_0$-definable.

\section{Some preliminary results}

We will need some results from \cite{skper}, which allow choosing a special kind of operators for witnesses in the definition for conditional computability.

For a natural number $k$ and a $k$-tuple $(\xi_1,\ldots,\xi_k) \in \mathbb{R}^k$ we denote by $\mathbb{A}_{\xi_1,\ldots,\xi_k}$ the set of all $2k$-tuples of unary functions $(f_1,g_1,\ldots,f_k,g_k)$, such that $f_1(n).g_1(n) = \ldots = f_k(n).g_k(n) = 0$ for all $n \in \mathbb{N}$ and the triples $(f_1,g_1,\mathrm{id}_\mathbb{N}), \ldots, (f_k,g_k,\mathrm{id}_\mathbb{N})$ name the real numbers $\xi_1,\ldots,\xi_k$, respectively.

Under the same assumptions for all $n\in\mathbb{N}$ let us denote by $\mathbb{A}_{\xi_1,\ldots,\xi_k}^{[n]}$ the set of all $2k$-tuples of natural numbers $(x_1,y_1,\ldots,x_k,y_k)$, such that the following conditions hold:
$$ \left|x_1 - y_1 - (n+1)\xi_1\right| < 1,\;\;\ldots,\;\;\left|x_k - y_k - (n+1)\xi_k\right| < 1,$$
$$ x_1.y_1 = \ldots = x_k.y_k = 0. $$

We can identify the set $\mathbb{A}_{\xi_1,\ldots,\xi_k}$ with the Cartesian product
$$ \mathbb{A}_{\xi_1,\ldots,\xi_k}^{[0]} \times \mathbb{A}_{\xi_1,\ldots,\xi_k}^{[1]} \times \mathbb{A}_{\xi_1,\ldots,\xi_k}^{[2]} \times \ldots $$
through the bijection $\mathbb{F}$, which maps the $2k$-tuple $(f_1,g_1,\ldots,f_k,g_k) \in \mathbb{A}_{\xi_1,\ldots,\xi_k}$ to the unary function $t = \mathbb{F}(f_1,g_1,\ldots,f_k,g_k)$, defined by $t(n) = (f_1(n),g_1(n),\ldots,f_k(n),g_k(n))$, which is easily seen to belong to the Cartesian product.

For all $n\in\mathbb{N}$ the set $\mathbb{A}_{\xi_1,\ldots,\xi_k}^{[n]}$ contains at most $2^k$ elements and therefore it is compact in any topology. We endow every $\mathbb{A}_{\xi_1,\ldots,\xi_k}^{[n]}$ with discrete topology and in the Cartesian product we introduce the product topology. According to Tychonoff's theorem the Cartesian product is compact. Using the bijection $\mathbb{F}$ we transfer the topology from the Cartesian product into $\mathbb{A}_{\xi_1,\ldots,\xi_k}$, that is $U$ is open in $\mathbb{A}_{\xi_1,\ldots,\xi_k}$ if and only if $\mathbb{F}[U]$ is open in the Cartesian product. Of course, $\mathbb{F}$ becomes a homeomorphism and therefore $\mathbb{A}_{\xi_1,\ldots,\xi_k}$ is also compact. 

\begin{lem}\label{l1}
Let $k\in\mathbb{N}, (\xi_1,\ldots,\xi_k) \in \mathbb{R}^k$ and $E$ be a continuous $2k$-operator. Assume that for every choice of $(f_1,g_1,\ldots,f_k,g_k) \in \mathbb{A}_{\xi_1,\ldots,\xi_k}$ there exists a natural number $s$, such that
\begin{equation}\label{eqe}
 E(f_1,g_1,\ldots,f_k,g_k)(s) = 0.
\end{equation}
Then there exists $T\in\mathbb{N}$, such that for all $(f_1,g_1,\ldots,f_k,g_k) \in \mathbb{A}_{\xi_1,\ldots,\xi_k}$ the equality (\ref{eqe}) holds for some $s \leq T$.
\end{lem}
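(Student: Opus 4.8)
The plan is to run a standard compactness argument on $\mathbb{A}_{\xi_1,\ldots,\xi_k}$, using the continuity of $E$ to convert the pointwise existence of a suitable $s$ into an open cover indexed by the candidate values of $s$, and then extracting a finite subcover.

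First I would fix, for each $s \in \mathbb{N}$, the set
$$ U_s = \{(f_1,g_1,\ldots,f_k,g_k) \in \mathbb{A}_{\xi_1,\ldots,\xi_k} : E(f_1,g_1,\ldots,f_k,g_k)(s) = 0\}. $$
The hypothesis of the lemma says precisely that every point of $\mathbb{A}_{\xi_1,\ldots,\xi_k}$ belongs to at least one $U_s$, so the family $\{U_s\}_{s\in\mathbb{N}}$ covers $\mathbb{A}_{\xi_1,\ldots,\xi_k}$.

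The key step is to show that each $U_s$ is open. Suppose $(f_1,g_1,\ldots,f_k,g_k) \in U_s$. By the continuity of $E$ (applied with the argument $x = s$), there is a $z \in \mathbb{N}$ such that every tuple $(f_1',g_1',\ldots,f_k',g_k')$ whose components agree with $f_1,g_1,\ldots,f_k,g_k$ on all arguments $t \leq z$ satisfies $E(f_1',g_1',\ldots,f_k',g_k')(s) = 0$. Under the homeomorphism $\mathbb{F}$, the set of tuples in $\mathbb{A}_{\xi_1,\ldots,\xi_k}$ that agree with the given one on all arguments up to $z$ corresponds to the basic open cylinder of $\mathbb{A}_{\xi_1,\ldots,\xi_k}^{[0]} \times \cdots \times \mathbb{A}_{\xi_1,\ldots,\xi_k}^{[z]} \times \cdots$ obtained by fixing the coordinates $0, 1, \ldots, z$. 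Hence this set is an open neighborhood of the given point contained in $U_s$, which shows that $U_s$ is open. After this, I would simply invoke the compactness of $\mathbb{A}_{\xi_1,\ldots,\xi_k}$ established above via Tychonoff's theorem to extract a finite subcover $U_{s_1}, \ldots, U_{s_m}$; setting $T = \max\{s_1,\ldots,s_m\}$, any tuple lies in some $U_{s_j}$ and thus satisfies (\ref{eqe}) for $s = s_j \leq T$.

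I would expect the only subtle point---the main obstacle---to be the verification that membership in $U_s$ is an open condition. This rests on correctly matching the continuity of $E$ (the fact that $E(\ldots)(s)$ depends on only finitely many initial values of the argument functions) with the basic open sets of the product topology transferred through $\mathbb{F}$. Once this matching is set up carefully, the passage to a finite subcover and the choice of $T$ are immediate.
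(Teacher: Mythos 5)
Your proof is correct and follows essentially the same route as the paper: both arguments rest on the Tychonoff compactness of $\mathbb{A}_{\xi_1,\ldots,\xi_k}$ and on converting the continuity of $E$ into openness of cylinder sets, finishing with a finite cover and a maximum. The only difference is bookkeeping: the paper introduces the least-witness functional $M(f_1,g_1,\ldots,f_k,g_k) = \mu s[E(f_1,g_1,\ldots,f_k,g_k)(s) = 0]$ and shows it is locally constant (applying continuity at all arguments $m \leq M$), whereas you cover the space directly by the level sets $U_s$, which needs continuity only at the single argument $s$ --- a slightly leaner version of the same argument.
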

\begin{proof} On the set $\mathbb{A}_{\xi_1,\ldots,\xi_k}$ we define the functional $M$ by
$$ M(f_1,g_1,\ldots,f_k,g_k) = \mu s[E(f_1,g_1,\ldots,f_k,g_k)(s) = 0]. $$
The assumptions in the lemma guarantee that the minimization is always successful. Let us fix a $2k$-tuple of unary functions $(f_1^0,g_1^0,\ldots,f_k^0,g_k^0) \in \mathbb{A}_{\xi_1,\ldots,\xi_k}$ and let $$M(f_1^0,g_1^0,\ldots,f_k^0,g_k^0) = n.$$ By the continuity of $E$ we can choose a natural number $z$, such that for any $2k$-tuple of unary functions $(f_1,g_1,\ldots,f_k,g_k)$ the equalities
$$ f_i(x) = f_i^0(x),\;\;\; g_i(x) = g_i^0(x)\;\;\;(i=1,\ldots,k) $$
for all natural $x \leq z$ imply the equalities
$$ E(f_1,g_1,\ldots,f_k,g_k)(m) = E(f_1^0,g_1^0,\ldots,f_k^0,g_k^0)(m)\;\;\;\text{ for }m=0,1,\ldots,n. $$
But from the last equalities we obtain $M(f_1,g_1,\ldots,f_k,g_k) = n = M(f_1^0,g_1^0,\ldots,f_k^0,g_k^0)$. In other words, on the open neighbourhood $U_0 = \mathbb{F}^{-1}[V_0]$ of $(f_1^0,g_1^0,\ldots,f_k^0,g_k^0)$, where
\begin{multline*}
 V_0 = \{(f_1^0(0),g_1^0(0),\ldots,f_k^0(0),g_k^0(0))\} \times \ldots \\
    \times \{(f_1^0(z),g_1^0(z),\ldots,f_k^0(z),g_k^0(z))\} \\
      \times \mathbb{A}_{\xi_1,\ldots,\xi_k}^{[z+1]} \times \mathbb{A}_{\xi_1,\ldots,\xi_k}^{[z+2]} \times \ldots,
\end{multline*}
the functional $M$ assumes constant value $n$. By compactness, the set $\mathbb{A}_{\xi_1,\ldots,\xi_k}$ can be covered with finitely many open sets, such that the functional $M$ is constant on each one of them. Therefore, $M$ assumes only finitely many values and we can choose $T$ to be the largest of these values.
\end{proof}

To make use of our new restricted system of names for tuples of real numbers, we need a uniform way to obtain this names. For this purpose, we define the auxiliary function $ehelp : \mathbb{N}^4 \rightarrow \mathbb{N}$ by
$$ ehelp(p,q,r,n) = \left\lfloor (n+1)\frac{p \dotminus q}{r+1} + \frac{1}{2} \right\rfloor. $$
It is clear that $ehelp \in \mathcal{M}^2$. Its main properties are listed in the following remark.

\begin{rem}\label{propehelp}
 For all natural numbers $p,q,r,n$ we have
  $$ ehelp(p,q,r,n).ehelp(q,p,r,n) = 0 $$
and the inequality
  $$ \left|\frac{ehelp(p,q,r,n) - ehelp(q,p,r,n)}{n+1} - \frac{p-q}{r+1} \right| \leq \frac{1}{2(n+1)}. $$
\end{rem}

The transition to the special names is realized by the $3$-operator $K$ from Section 1.3 of \cite{m2}, defined by
 $$ K(f,g,h)(n) = ehelp(f(2n+1),g(2n+1),h(2n+1),n). $$
Of course, $K$ is $\mathcal{M}^2$-substitutional ($K\in\mathbf{O}_{\mathcal{M}^2}$).

\begin{lem}\label{operatorK}
For all unary functions $f,g,h$ and natural numbers $n$ at least one of the numbers $K(f,g,h)(n)$ and $K(g,f,h)(n)$ is $0$. If $(f,g,h)$ names a real number $\xi$, then the triple $(K(f,g,h),K(g,f,h),\mathrm{id}_\mathbb{N})$ also names $\xi$.
\end{lem}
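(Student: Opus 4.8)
The plan is to derive both assertions directly from the two properties of $ehelp$ recorded in Remark~\ref{propehelp}, after unfolding the definition of $K$. Throughout I would abbreviate $p = f(2n+1)$, $q = g(2n+1)$ and $r = h(2n+1)$, so that $K(f,g,h)(n) = ehelp(p,q,r,n)$ while $K(g,f,h)(n) = ehelp(q,p,r,n)$: the roles of $f$ and $g$ are simply interchanged in the first two slots, and the third argument $h$ is unchanged.

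For the first claim I would observe that the product $K(f,g,h)(n)\cdot K(g,f,h)(n)$ equals $ehelp(p,q,r,n)\cdot ehelp(q,p,r,n)$, which is $0$ by the first identity of Remark~\ref{propehelp}. Since a product of two natural numbers vanishes exactly when at least one factor is $0$, this settles the first assertion.

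For the second claim I would verify the naming inequality for the triple $(K(f,g,h),K(g,f,h),\mathrm{id}_\mathbb{N})$. Since $\mathrm{id}_\mathbb{N}(n)+1 = n+1$, the quantity to control is
$$\left|\frac{K(f,g,h)(n)-K(g,f,h)(n)}{n+1}-\xi\right| = \left|\frac{ehelp(p,q,r,n)-ehelp(q,p,r,n)}{n+1}-\xi\right|.$$
Into this I would insert the intermediate value $\frac{p-q}{r+1} = \frac{f(2n+1)-g(2n+1)}{h(2n+1)+1}$ and apply the triangle inequality. The second estimate of Remark~\ref{propehelp} bounds the first resulting term by $\frac{1}{2(n+1)}$, and the naming hypothesis on $(f,g,h)$, evaluated at the point $2n+1$, bounds the second by $\frac{1}{(2n+1)+1} = \frac{1}{2(n+1)}$. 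Adding the two bounds yields a value strictly less than $\frac{1}{n+1}$, which is exactly what the naming condition for $(K(f,g,h),K(g,f,h),\mathrm{id}_\mathbb{N})$ demands.

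The only delicate point, and it is a slight one, is the choice of the argument $2n+1$ built into the definition of $K$: it is precisely this doubling that makes the naming error at level $2n+1$ equal to $\frac{1}{2(n+1)}$, so that it combines with the rounding error of $ehelp$ to produce exactly the target bound $\frac{1}{n+1}$. Once this alignment is noticed, no real obstacle remains and the rest is routine triangle-inequality bookkeeping.
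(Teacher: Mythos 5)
Your proof is correct and follows exactly the route the paper intends: the paper defers the details to Section 1.3 of \cite{m2} but states explicitly that the argument is based on Remark \ref{propehelp}, which is precisely what you use. Unfolding $K$ at the argument $2n+1$, getting the product identity for the first claim, and then combining the rounding error of $ehelp$ (at most $\frac{1}{2(n+1)}$) with the naming error of $(f,g,h)$ at $2n+1$ (strictly less than $\frac{1}{2(n+1)}$) via the triangle inequality is the intended argument, including your correct observation that the strictness of the second bound makes the total strictly less than $\frac{1}{n+1}$.
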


\begin{cor}\label{coroperK}
For any $k$ and $(\xi_1,\ldots,\xi_k) \in \mathbb{R}^k$, if $(f_i,g_i,h_i)$ names $\xi_i$ for $i=1,\ldots,k$, then
$$ (K(f_1,g_1,h_1),K(g_1,f_1,h_1),\ldots,K(f_k,g_k,h_k),K(g_k,f_k,h_k)) $$
belongs to $\mathbb{A}_{\xi_1,\ldots,\xi_k}$.
\end{cor}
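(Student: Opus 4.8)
The plan is to unwind the definition of $\mathbb{A}_{\xi_1,\ldots,\xi_k}$ and check that each of its two defining requirements holds, applying Lemma \ref{operatorK} separately at each index $i$. Recall that a $2k$-tuple of unary functions lies in $\mathbb{A}_{\xi_1,\ldots,\xi_k}$ precisely when (a) the product of the two functions in each consecutive pair is identically $0$, and (b) for each $i$, the triple formed by the $i$-th pair together with $\mathrm{id}_\mathbb{N}$ names $\xi_i$. So I would abbreviate $\phi_i = K(f_i,g_i,h_i)$ and $\psi_i = K(g_i,f_i,h_i)$ and verify (a) and (b) for these.

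For requirement (a), I would fix $i$ and $n$ and invoke the first assertion of Lemma \ref{operatorK} with the functions $f_i, g_i, h_i$: it says that at least one of $\phi_i(n)$ and $\psi_i(n)$ equals $0$. Since both are natural numbers, their product $\phi_i(n).\psi_i(n)$ is then $0$, and as $i$ and $n$ were arbitrary this gives the full product condition.

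For requirement (b), I would use the hypothesis that $(f_i,g_i,h_i)$ names $\xi_i$ and apply the second assertion of Lemma \ref{operatorK}, which states directly that under this hypothesis the triple $(K(f_i,g_i,h_i),K(g_i,f_i,h_i),\mathrm{id}_\mathbb{N}) = (\phi_i,\psi_i,\mathrm{id}_\mathbb{N})$ names $\xi_i$. Ranging over $i = 1,\ldots,k$ yields all the naming conditions, so both defining requirements of $\mathbb{A}_{\xi_1,\ldots,\xi_k}$ are met.

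There is no genuine obstacle here: the corollary is just the index-wise assembly of Lemma \ref{operatorK}, whose two clauses supply exactly the two conditions defining $\mathbb{A}_{\xi_1,\ldots,\xi_k}$. The only point requiring care is the bookkeeping — making sure the pairs $(\phi_i,\psi_i)$ are aligned correctly with the arguments of $K$ (note that $\psi_i$ swaps the roles of $f_i$ and $g_i$) so that the lemma applies verbatim at each coordinate.
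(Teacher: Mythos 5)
Your proof is correct and follows exactly the paper's route: the paper states that Corollary \ref{coroperK} ``follows immediately'' from Lemma \ref{operatorK}, and your coordinate-wise application of the lemma's two clauses (the product condition and the naming condition) is precisely that immediate argument, spelled out. Nothing is missing.
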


The proof of Lemma \ref{operatorK} is based on Remark \ref{propehelp} and can be found in Section 1.3 of \cite{m2}. Corollary \ref{coroperK} follows immediately.

\begin{lem}\label{l2}
Let $\mathbf{O}$ be a class of operators, which is the second component of some acceptable pair $(\mathcal{F},\mathbf{O})$. Let $k\in\mathbb{N}$ and $\theta : D \rightarrow \mathbb{R}, D \subseteq \mathbb{R}^k$ be a conditionally $\mathbf{O}$-computable real function. Then there exist $3k$-operator $E$ and $(3k+1)$-operators $F, G, H$, all of them belonging to $\mathbf{O}$, such that for any point $(\xi_1,\ldots,\xi_k) \in D$ there exists $T\in\mathbb{N}$, for which the following conditions hold:
\begin{enumerate}
\item If the triples $(f_1,g_1,h_1),\ldots,(f_k,g_k,h_k)$ name $\xi_1,\ldots,\xi_k$ respectively, then the equality
\begin{equation}\label{eqeful}
 E(f_1,g_1,h_1,\ldots,f_k,g_k,h_k)(s) = 0
\end{equation}
holds for some natural number $s \leq T$.
\item Whenever $(f_1,g_1,h_1),\ldots,(f_k,g_k,h_k)$ name $\xi_1,\ldots,\xi_k$ respectively and the equality (\ref{eqeful}) holds for some natural number $s$, the triple
$$ (F(f_1,g_1,h_1,\ldots,f_k,g_k,h_k,\widehat{s}), $$
$$ G(f_1,g_1,h_1,\ldots,f_k,g_k,h_k,\widehat{s}), $$
$$ H(f_1,g_1,h_1,\ldots,f_k,g_k,h_k,\widehat{s})) $$
names the real number $\theta(\xi_1,\ldots,\xi_k)$.
\end{enumerate}
\end{lem}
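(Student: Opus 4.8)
The plan is to start from witnessing operators supplied by conditional $\mathbf{O}$-computability and compose them with the transition operator $K$, thereby channeling the search for $s$ into the \emph{compact} space of special names $\mathbb{A}_{\xi_1,\ldots,\xi_k}$, where Lemma \ref{l1} applies and yields the required uniform bound $T$. Since $\theta$ is conditionally $\mathbf{O}$-computable, I would first fix a $3k$-operator $E_0$ and $(3k+1)$-operators $F_0,G_0,H_0$ in $\mathbf{O}$ witnessing Definition \ref{defcondo}.

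Writing $\tilde f_i = K(f_i,g_i,h_i)$ and $\tilde g_i = K(g_i,f_i,h_i)$, I would then define the new operators by prepending the $K$-transformation:
\begin{align*}
E(f_1,g_1,h_1,\ldots,f_k,g_k,h_k) &= E_0(\tilde f_1,\tilde g_1,\mathrm{id}_\mathbb{N},\ldots,\tilde f_k,\tilde g_k,\mathrm{id}_\mathbb{N}),\\
F(f_1,g_1,h_1,\ldots,f_k,g_k,h_k,a) &= F_0(\tilde f_1,\tilde g_1,\mathrm{id}_\mathbb{N},\ldots,\tilde f_k,\tilde g_k,\mathrm{id}_\mathbb{N},a),
\end{align*}
and analogously for $G,H$. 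These belong to $\mathbf{O}$: the operator $K$ lies in $\mathbf{O}_{\mathcal{M}^2}\subseteq\mathbf{O}$, the projections and the operator returning $\mathrm{id}_\mathbb{N}$ are available by conditions (4) and (5) of \ref{accpair}, and $\mathbf{O}$ is closed under composition by condition (7); the last argument $a$ is passed through unchanged as the projection onto the final slot.

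To obtain the uniform bound I would pass to the $2k$-operator $\mathcal{E}$ defined by $\mathcal{E}(u_1,v_1,\ldots,u_k,v_k) = E_0(u_1,v_1,\mathrm{id}_\mathbb{N},\ldots,u_k,v_k,\mathrm{id}_\mathbb{N})$, which is continuous because $E_0$ is. For any $(u_1,v_1,\ldots,u_k,v_k)\in\mathbb{A}_{\xi_1,\ldots,\xi_k}$ the triples $(u_i,v_i,\mathrm{id}_\mathbb{N})$ name $\xi_i$, so condition (1) of Definition \ref{defcondo} supplies an $s$ with $\mathcal{E}(u_1,v_1,\ldots,u_k,v_k)(s)=0$. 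Hence Lemma \ref{l1} applies and furnishes a single $T$, depending only on the point, for which this equality holds with some $s\le T$ uniformly over $\mathbb{A}_{\xi_1,\ldots,\xi_k}$. Now, given arbitrary names $(f_i,g_i,h_i)$ of $\xi_i$, Corollary \ref{coroperK} places $(\tilde f_1,\tilde g_1,\ldots,\tilde f_k,\tilde g_k)$ in $\mathbb{A}_{\xi_1,\ldots,\xi_k}$, and since $E(\ldots)=\mathcal{E}(\tilde f_1,\tilde g_1,\ldots,\tilde f_k,\tilde g_k)$, condition (1) of the lemma follows from this uniform $T$. For condition (2), if $E(\ldots)(s)=0$ then $E_0(\tilde f_1,\tilde g_1,\mathrm{id}_\mathbb{N},\ldots)(s)=0$; by Lemma \ref{operatorK} the triples $(\tilde f_i,\tilde g_i,\mathrm{id}_\mathbb{N})$ name $\xi_i$, so condition (2) of Definition \ref{defcondo} guarantees that the triple $(F_0(\ldots,\widehat{s}),G_0(\ldots,\widehat{s}),H_0(\ldots,\widehat{s}))$ names $\theta(\xi_1,\ldots,\xi_k)$, and this triple is exactly $(F(\ldots,\widehat{s}),G(\ldots,\widehat{s}),H(\ldots,\widehat{s}))$.

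The main obstacle, and the entire reason for interposing $K$, is that Lemma \ref{l1} delivers a uniform bound only on the compact space $\mathbb{A}_{\xi_1,\ldots,\xi_k}$ of special names; the set of \emph{all} names of a fixed point carries no such compactness, so the raw witnesses $E_0,F_0,G_0,H_0$ from Definition \ref{defcondo} give no point-uniform control over $s$. Composing with $K$ is precisely what forces every arbitrary name into the compact space while preserving the represented real number, so that the uniform bound transfers back to arbitrary names. The remaining steps are routine verifications of operator membership and of the defining inequalities, with no further difficulty expected.
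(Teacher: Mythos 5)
Your proposal is correct and follows essentially the same route as the paper's proof: it defines the new operators by pre-composing the witnesses from Definition \ref{defcondo} with the transition operator $K$ (your $E,F,G,H$ are the paper's $E',F',G',H'$), establishes membership in $\mathbf{O}$ via conditions (4), (5), (7) of Definition \ref{accpair}, and obtains the uniform bound $T$ by applying Lemma \ref{l1} to the restricted $2k$-operator (your $\mathcal{E}$, the paper's $E''$) on the compact space $\mathbb{A}_{\xi_1,\ldots,\xi_k}$, transferring it back to arbitrary names through Corollary \ref{coroperK} and Lemma \ref{operatorK}. No gaps; the argument matches the paper step for step.
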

\begin{proof} Let $E,F,G,H\in\mathbf{O}$ be witnesses for $\theta$ from Definition \ref{defcondo}. We define $3k$-operator $E'$ and $(3k+1)$-operators $F',G',H'$ by
\begin{align*}
 E'(f_1,g_1,h_1,\ldots,f_k,g_k,h_k) & \\
       = E(K(f_1,g_1,h_1),K(g_1,&f_1,h_1),\mathrm{id}_\mathbb{N},\ldots,K(f_k,g_k,h_k),K(g_k,f_k,h_k),\mathrm{id}_\mathbb{N}),\\
 F'(f_1,g_1,h_1,\ldots,f_k,g_k,h_k,a) & \\
       = F(K(f_1,g_1,h_1),K(g_1,&f_1,h_1),\mathrm{id}_\mathbb{N},\ldots,K(f_k,g_k,h_k),K(g_k,f_k,h_k),\mathrm{id}_\mathbb{N},a),\\
 G'(f_1,g_1,h_1,\ldots,f_k,g_k,h_k,a) & \\
       = G(K(f_1,g_1,h_1),K(g_1,&f_1,h_1),\mathrm{id}_\mathbb{N},\ldots,K(f_k,g_k,h_k),K(g_k,f_k,h_k),\mathrm{id}_\mathbb{N},a),\\
 H'(f_1,g_1,h_1,\ldots,f_k,g_k,h_k,a) & \\
        = H(K(f_1,g_1,h_1),K(g_1,&f_1,h_1),\mathrm{id}_\mathbb{N},\ldots,K(f_k,g_k,h_k),K(g_k,f_k,h_k),\mathrm{id}_\mathbb{N},a).
\end{align*}
We will show that we can choose $E',F',G',H'$ in the role of $E,F,G,H$ from the statement of the lemma. We have $K \in \mathbf{O}$, since $\mathbf{O}_{\mathcal{M}^2} \subseteq \mathbf{O}_\mathcal{F} \subseteq \mathbf{O}$. From conditions (4), (5) and (7) of Definition \ref{accpair} we obtain that $E',F',G',H' \in \mathbf{O}$. Let us fix a point $(\xi_1,\ldots,\xi_k) \in D$.

We define the $2k$-operator $E''$ by
$$ E''(f_1,g_1,\ldots,f_k,g_k) = E(f_1,g_1,\mathrm{id}_\mathbb{N},\ldots,f_k,g_k,\mathrm{id}_\mathbb{N}). $$
It is clear that $E''$ is continuous, since $E$ is continuous from condition (3) of Definition \ref{accpair}. Moreover, if $(f_1,g_1,\ldots,f_k,g_k) \in \mathbb{A}_{\xi_1,\ldots,\xi_k}$, then $(f_i,g_i,\mathrm{id}_\mathbb{N})$ names $\xi_i$ for $i=1,\ldots,k$ and condition (1) from Definition \ref{defcondo} gives that there exists a natural number $s$, such that
$$ E(f_1,g_1,\mathrm{id}_\mathbb{N},\ldots,f_k,g_k,\mathrm{id}_\mathbb{N})(s) = E''(f_1,g_1,\ldots,f_k,g_k)(s) = 0. $$
We apply Lemma \ref{l1} for the operator $E''$ and choose the corresponding natural number $T$. Using Corollary \ref{coroperK} it is easily seen that the same number $T$ satisfies condition (1) in the present lemma (with $E'$ in the role of $E$). Condition (2) is an easy consequence of Lemma \ref{operatorK} and condition (2) from Definition \ref{defcondo}.
\end{proof}

\section{The characterization theorem}

Now we are ready to formulate a proper notion of conditional computability for real functions in the style of Tent and Ziegler and prove the characterization theorem.

\begin{defi}\label{tzcond} For a class of functions $\mathcal{F}$ and $k \in \mathbb{N}$ the real function $\theta : D \rightarrow \mathbb{R}, D \subseteq \mathbb{R}^k$ will be called \emph{conditionally $\mathcal{F}$-computable in the style of Tent and Ziegler}, if there exist functions $d_0 \in \mathcal{T}_1 \cap \mathcal{F}, d \in \mathcal{T}_2 \cap \mathcal{F}, e \in \mathcal{T}_{3k+1}\cap\mathcal{F}$ and $f,g,h \in \mathcal{T}_{6k+2} \cap\mathcal{F}$, such that for all $(\xi_1,\ldots,\xi_k) \in D$ we have:
\begin{enumerate}
\item There exists $s_0\in\mathbb{N}$, such that for all natural numbers $s \geq s_0$ and

all $p_1^0,q_1^0,r_1^0,\ldots,p_k^0,q_k^0,r_k^0\in\mathbb{N}$, the inequalities
\begin{equation}\label{ccteq1}
 \left|\frac{p_i^0 - q_i^0}{r_i^0 + 1} - \xi_i\right| < \frac{1}{d_0(s)+1}\;\;\;\text{ for } i = 1, \ldots, k
\end{equation}
imply the equality
\begin{equation}\label{ccteq2}
 e(p_1^0,q_1^0,r_1^0,\ldots,p_k^0,q_k^0,r_k^0,s) = 0.
\end{equation}
\item For all natural numbers $s, p_1^0,q_1^0,r_1^0,\ldots,p_k^0,q_k^0,r_k^0, p_1,q_1,r_1,\ldots,p_k,q_k,r_k, t$, which satisfy (\ref{ccteq1}), (\ref{ccteq2}) and 
\begin{equation}\label{ccteq3}
 \left|\xi_i\right| \leq s+1,\;\;\; \left|\frac{p_i - q_i}{r_i + 1} - \xi_i\right| < \frac{1}{d(s,t)+1}\;\;\;\text{ for }i = 1,\ldots,k,
\end{equation}
we have
\begin{equation}\label{ccteq4}
 \left|\frac{p - q}{r + 1} - \theta(\xi_1,\ldots,\xi_k)\right| < \frac{1}{t+1},
\end{equation}
where
\begin{align}
 p = f(p_1^0,q_1^0,r_1^0,\ldots,p_k^0,q_k^0,r_k^0, p_1,q_1,r_1,\ldots,p_k,q_k,r_k, s, t), \label{ccteq51} \\
 q = g(p_1^0,q_1^0,r_1^0,\ldots,p_k^0,q_k^0,r_k^0, p_1,q_1,r_1,\ldots,p_k,q_k,r_k, s, t), \label{ccteq52} \\
 r = h(p_1^0,q_1^0,r_1^0,\ldots,p_k^0,q_k^0,r_k^0, p_1,q_1,r_1,\ldots,p_k,q_k,r_k, s, t). \label{ccteq53}
\end{align}
\end{enumerate}
\end{defi}

\noindent The intuition behind this definition is not very clear at first glance, since its main purpose is to be proper for proving the characterization thereom. The important thing is that the definition does not refer to names of real numbers.

\begin{thm}\label{charthcond} Let $(\mathcal{F},\mathbf{O})$ be an acceptable pair and $k\in\mathbb{N}$. The real function $\theta : D \rightarrow \mathbb{R}, D \subseteq \mathbb{R}^k$ is conditionally $\mathbf{O}$-computable if and only if $\theta$ is conditionally $\mathcal{F}$-computable in the style of Tent and Ziegler.
\end{thm}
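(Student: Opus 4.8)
The plan is to prove both directions of the equivalence by explicitly translating between operators in $\mathbf{O}$ and functions in $\mathcal{F}$, exploiting the restricted naming system from Section 4. The crucial bridge is condition (8) of Definition \ref{accpair}, which converts an operator in $\mathbf{O}$ applied to functions of the form $\lambda t.f(\vec{s},t)$ with $f\in\mathcal{F}$ into a function of $\mathcal{F}$, together with the uniformity condition (9), which lets a number-theoretic function in $\mathcal{F}$ reconstruct the behaviour of an operator by feeding it only finitely many values of majorized arguments.

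For the forward direction, I would start from a conditionally $\mathbf{O}$-computable $\theta$ and invoke Lemma \ref{l2} to obtain witnesses $E,F,G,H\in\mathbf{O}$ that already use the special names produced by the operator $K$, so that for each point of $D$ the search for $s$ is bounded by some $T$ and the value-approximation works for every successful $s$. The idea is then to produce the required functions $d_0,d,e,f,g,h\in\mathcal{F}$ as follows. Given rational approximations $\frac{p_i^0-q_i^0}{r_i^0+1}$ that are close enough to $\xi_i$ (this closeness being quantified by $d_0$), I would reproduce the special names by applying $ehelp$ in the spirit of Remark \ref{propehelp} and feed the resulting finitely many values into $E$; the uniformity condition (9) tells me how many arguments $t$ of these names I must supply in order to pin down $E(\ldots)(s)$, and condition (8) guarantees that the resulting mapping is a function $e\in\mathcal{F}$. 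I would set up $d_0$ so that (\ref{ccteq1}) forces the reconstructed rational to lie within $\frac{1}{s+1}$ of $\xi_i$, so that $E$ is being evaluated on genuine names and (\ref{ccteq2}) expresses exactly that the search succeeds. Symmetrically, $f,g,h$ are obtained from $F,G,H$: from the approximations governed by $d(s,t)$ I reconstruct names, apply the operators with the extra argument $\widehat{s}$, and again use (9) to bound the needed number of values and (8) to land in $\mathcal{F}$. The boundedness of $s$ by $T$ from Lemma \ref{l2} is what lets me take a single $s_0$ (namely $s_0=T$, or rather arrange $d_0,d$ so that $s\geq T$ suffices) in condition (1) of Definition \ref{tzcond}.

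For the converse direction, I would start from functions $d_0,d,e,f,g,h\in\mathcal{F}$ witnessing TZ-style conditional computability and build operators $E,F,G,H\in\mathbf{O}$. Here the natural move is to define, for given names $(f_i,g_i,h_i)$ of $\xi_i$, an operator that first extracts rational approximations of the required accuracy by evaluating the input functions at a sufficiently large argument (computed from $s$ via $d_0$ and $d$), and then applies the $\mathcal{F}$-function $e$ (respectively $f,g,h$). Concretely I would set $E(\vec{f})(s)=e(\text{approximations at precision }d_0(s),\,s)$, where the approximations are obtained by a substitutional term in $h_i,f_i,g_i$ and functions from $\mathcal{F}$; conditions (4),(5),(6) of Definition \ref{accpair} guarantee this lies in $\mathbf{O}_{\mathcal{F}}\subseteq\mathbf{O}$. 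The inequalities (\ref{ccteq1}), (\ref{ccteq3}) are precisely what a name guarantees once the evaluation point is chosen large enough in terms of $d_0(s)$ and $d(s,t)$, so (\ref{ccteq2}) yields the existence of a zero of $E$ and (\ref{ccteq4}) yields that the output triple names $\theta(\vec{\xi})$. Existence of the parameter $s$ (condition (1) of Definition \ref{defcondo}) follows from the $s_0$ in Definition \ref{tzcond}, and correctness for every successful $s$ follows from condition (2) there.

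The main obstacle I anticipate is the bookkeeping in the forward direction around the uniformity condition (9): to apply condition (8) and land in $\mathcal{F}$, the function $e$ (and similarly $f,g,h$) must be presented as $F(\lambda t.f_1(\vec{s},t),\ldots)(x)$ with genuine $\mathcal{F}$-functions $f_j$, but the obvious reconstruction of the special names via $ehelp$ depends on the rational data $p_i^0,q_i^0,r_i^0$ and on the search argument, so I must carefully arrange the arguments $\vec{s}$ so that the reconstructed name-functions are uniformly in $\mathcal{F}$ and are majorized by a common monotone $g$ to which (9) applies. Matching the precision parameters — choosing $d_0$ and $d$ so that the rational approximations fed into the operators are close enough for $K$ to yield correct special names, while simultaneously ensuring the evaluation points stay within the range where (9) controls the operator — is the delicate part, and it is essentially here that the ideas from \cite{chartgen} must be imported. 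The rest of both directions is a routine verification of the inequalities using Remark \ref{propehelp} and the triangle inequality.
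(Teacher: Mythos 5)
Your plan follows the same architecture as the paper (Lemma \ref{l2}, reconstruction of names via $ehelp$, the uniformity condition (9) to localize operators, condition (8) to land in $\mathcal{F}$, substitutional operators for the converse), but it omits precisely the two ideas that carry the forward direction, and neither is supplied by \cite{chartgen}, which treats only the uniform case. First, applying $F,G,H$ ``with the extra argument $\widehat{s}$'' does not work: Lemma \ref{l2}(2) guarantees a correct output only when the constant argument is a number at which $E$ \emph{actually vanishes} on the name in question, whereas the parameter $s$ of Definition \ref{tzcond} is merely an upper bound on the search. You must extract a true witness $s_1=\mu_{x\leq s}[E(f_1^0,g_1^0,\mathrm{id}_\mathbb{N})(x)=0]$ and pass $\widehat{s_1}$; this is the paper's function $b$, and it is the very reason Definition \ref{accpair} strengthens acceptability to include closure of $\mathcal{F}$ under bounded minimization. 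Second, building the names fed to $F,G,H$ from the fine data $p_1,q_1,r_1$ alone (``the approximations governed by $d(s,t)$'') breaks the argument: the hypothesis (\ref{ccteq2}) asserts only that $E$ has a zero within $s$ on the reconstruction built from the \emph{coarse} data $p_1^0,q_1^0,r_1^0$; the coarse and fine $ehelp$-reconstructions differ in general already at small arguments (different roundings), and condition (2) of Definition \ref{tzcond} does not force $s$ to exceed the compactness bound $T$, so you cannot conclude that $E$ has any zero within $s$ on a genuine name extending the fine reconstruction. The paper's fix is the hybrid pair (\ref{deff}), (\ref{defg}): equal to the coarse reconstruction up to $v'(s)$, so that by the uniformity condition the zero found by $e$ transfers, and equal to the fine reconstruction above $v'(s)$, so that the hybrid is accurate enough up to $w'(s,t)$ to extend to a genuine name of $\xi_1$. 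Relatedly, your statement that a suitable $d_0$ makes $E$ ``evaluated on genuine names'' cannot be made literal: the $ehelp$-reconstruction of fixed rational data converges to that rational, not to $\xi_i$, so it is never a genuine name; only a controlled initial segment of it is usable, which is the whole point of invoking (9).

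The converse direction also has a genuine, if smaller, gap. With $E(\vec{f})(s)=e(\text{approximations at precision }d_0(s),s)$ as you define it, Definition \ref{defcondo}(2) demands a correct output at \emph{every} zero of $E$; but the premise (\ref{ccteq3}) of Definition \ref{tzcond}(2) includes $|\xi_i|\leq s+1$, which a name does not guarantee for arbitrary $s$. For small $s$ (with $|\xi_i|>s+1$) the value $e(\ldots,s)$ is unconstrained by Definition \ref{tzcond} and may vanish spuriously, and then nothing forces $F,G,H$ to output a valid approximation of $\theta(\vec{\xi})$, so conditional $\mathbf{O}$-computability fails. The paper avoids this by computing the internal parameter as $s=\max(f_1(0),g_1(0),s')$ from the operator argument $s'$: since $|\xi_1|<|f_1(0)-g_1(0)|+1\leq\max(f_1(0),g_1(0))+1\leq s+1$, every zero of $E$ automatically satisfies the offending premise, while values $s\geq s_0$ are still attained so that a zero exists. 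Your construction needs this (or an equivalent) modification; the rest of your converse direction is then essentially the paper's argument.
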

\begin{proof} Since $\mathcal{F}$ satisfies conditions (1) and (2) from Definition \ref{accpair}, we have that $\mathcal{M}^2 \subseteq \mathcal{F}$ and $\mathcal{F}$ is closed under bounded minimum and bounded maximum, as noted after Definition \ref{accpair}. We assume $k=1$. The generalization of the proof to arbitrary $k$ is straightforward.

For the direction $(\Longleftarrow)$, suppose that $d_0,d,e,f,g,h\in\mathcal{F}$ are functions, which satisfy the requirements from Definition \ref{tzcond}. We will show that $\theta$ is conditionally $\mathbf{O}$-computable. We define the operator $E$ by
$$ E(f_1,g_1,h_1)(s') = e(f_1(d_0(s)),g_1(d_0(s)),h_1(d_0(s)),s), $$
where $s = \max(f_1(0),g_1(0),s')$. We also define operators $F, G, H$ by
$$ F(f_1,g_1,h_1,a)(t) = p,\;\;\; G(f_1,g_1,h_1,a)(t) = q,\;\;\; H(f_1,g_1,h_1,a)(t) = r, $$
where the numbers $p, q, r$ are defined by the equalities (\ref{ccteq51}), (\ref{ccteq52}), (\ref{ccteq53}) with
\begin{align}
 p_1^0 = f_1(d_0(s)),\;\;\;q_1^0 = g_1(d_0(s)),\;\;\;r_1^0 = h_1(d_0(s)), \label{eqpqr1}\\
 p_1 = f_1(d(s,t)),\;\;\;q_1 = g_1(d(s,t)),\;\;\;r_1 = h_1(d(s,t)) \label{eqpqr2}
\end{align}
and $s = \max(f_1(0),g_1(0),a(t))$. From conditions (4), (5) and (6) in Definition \ref{accpair} we obtain $E,F,G,H \in \mathbf{O}$. We will show that these operators can be chosen as witnesses for the conditional $\mathbf{O}$-computability of $\theta$. Let $\xi_1 \in D$ and $(f_1,g_1,h_1)$ name $\xi_1$.
Let us choose $s_0$ from condition (1) of Definition \ref{tzcond}. Let $s = \max(f_1(0),g_1(0),s_0)$. We have that $s \geq s_0$ and (\ref{ccteq1}) is satisfied for the numbers $p_1^0, q_1^0, r_1^0$, defined by the equalities (\ref{eqpqr1}). Then from (\ref{ccteq2}) we obtain
$$ e(p_1^0, q_1^0, r_1^0, s) = 0 = E(f_1,g_1,h_1)(s_0). $$
Thus there exists $s'$, such that $E(f_1,g_1,h_1)(s') = 0$.

Now let $s'\in\mathbb{N}$ and $E(f_1,g_1,h_1)(s') = 0$, that is we have
$$ e(f_1(d_0(s)),g_1(d_0(s)),h_1(d_0(s)),s) = 0, $$ 
where $s = \max(f_1(0),g_1(0),s')$. We use condition (2) of Definition \ref{tzcond} for arbitrary $t$, the chosen $s$ and the numbers $p_1^0,q_1^0,r_1^0,p_1,q_1,r_1$, defined by the equalities (\ref{eqpqr1}) and (\ref{eqpqr2}). The premises (\ref{ccteq1}), (\ref{ccteq2}), (\ref{ccteq3}) are satisfied, since $(f_1,g_1,h_1)$ names $\xi_1$ and therefore
$$ \left|\xi_1\right| < \left|f_1(0) - g_1(0)\right| + 1 \leq \max(f_1(0),g_1(0)) + 1 \leq s+1. $$ 
We obtain the inequality (\ref{ccteq4}) for the numbers $p, q, r$, defined by the equalities (\ref{ccteq51}), (\ref{ccteq52}), (\ref{ccteq53}). But for $a = \lambda x.s' = \widehat{s'}$ we have
$$ p = F(f_1,g_1,h_1,\widehat{s'})(t),\;\;\; q = G(f_1,g_1,h_1,\widehat{s'})(t),\;\;\; r = H(f_1,g_1,h_1,\widehat{s'})(t). $$
Thus
$$ \left|\frac{F(f_1,g_1,h_1,\widehat{s'})(t) - G(f_1,g_1,h_1,\widehat{s'})(t)}{H(f_1,g_1,h_1,\widehat{s'})(t) + 1} - \theta(\xi_1)\right| < \frac{1}{t+1} $$
for all $t \in \mathbb{N}$, that is
$$ (F(f_1,g_1,h_1,\widehat{s'}), G(f_1,g_1,h_1,\widehat{s'}), H(f_1,g_1,h_1,\widehat{s'})) $$
is a name of $\theta(\xi_1)$. Therefore $\theta$ is conditionally $\mathbf{O}$-computable.

For the converse $(\Longrightarrow)$, let us suppose that $\theta$ is conditionally $\mathbf{O}$-computable. Since $\mathbf{O}$ is the second component of an acceptable pair, we can apply Lemma \ref{l2} and choose operators $E,F,G,H \in \mathbf{O}$ for the real function $\theta$, according to the statement of the lemma. We then choose $1$-operators $\Omega,\Omega_1,\Omega_2,\Omega_3 \in \mathbf{O}$ from the uniformity condition in Definition \ref{accpair}, corresponding to the operators $E,F,G,H$, respectively. We define consecutively the following functions:
$$ u(x,s) = (s+2)(x+1), $$
$$ v(s,y) = \Omega(\lambda x.u(x,s))(y), $$
$$ v'(s) = \max_{y \leq s} v(s,y), $$
$$ d_0(s) = 6v'(s) + 5, $$
$$ w(s,t) = \max \bigl(\Omega_1(\lambda x.u(x,s))(t), \Omega_2(\lambda x.u(x,s))(t), \Omega_3(\lambda x.u(x,s))(t)\bigr), $$
$$ w'(s,t) = \max(v'(s), w(s,t)), $$
$$ d(s,t) = 6w'(s,t) + 5, $$
$$ b(p_1^0,q_1^0,r_1^0, s) = \mu_{x \leq s}[E(f_1^0,g_1^0,\mathrm{id}_{\mathbb{N}})(x) = 0], $$
$$ e(p_1^0,q_1^0,r_1^0, s) = \min_{x \leq s}E(f_1^0,g_1^0,\mathrm{id}_{\mathbb{N}})(x), $$
$$ f(p_1^0,q_1^0,r_1^0,p_1,q_1,r_1,s,t) = F(f_1,g_1,\mathrm{id}_{\mathbb{N}},\lambda x.b(p_1^0,q_1^0,r_1^0,s))(t), $$
$$ g(p_1^0,q_1^0,r_1^0,p_1,q_1,r_1,s,t) = G(f_1,g_1,\mathrm{id}_{\mathbb{N}},\lambda x.b(p_1^0,q_1^0,r_1^0,s))(t), $$
$$ h(p_1^0,q_1^0,r_1^0,p_1,q_1,r_1,s,t) = H(f_1,g_1,\mathrm{id}_{\mathbb{N}},\lambda x.b(p_1^0,q_1^0,r_1^0,s))(t), $$
where
\begin{equation}\label{defzero}
f_1^0 = \lambda x.ehelp(p_1^0,q_1^0,r_1^0,x),\;\;\; g_1^0 = \lambda x.ehelp(q_1^0,p_1^0,r_1^0,x),
\end{equation}
and
\begin{equation}\label{deff}
f_1 = \lambda x.\begin{cases}
               ehelp(p_1^0,q_1^0,r_1^0,x), \text{ if } x \leq v'(s),\\
							 ehelp(p_1,q_1,r_1,x), \text{ if } x > v'(s),
						\end{cases}
\end{equation}
\begin{equation}\label{defg}
g_1 = \lambda x.\begin{cases}
               ehelp(q_1^0,p_1^0,r_1^0,x), \text{ if } x \leq v'(s),\\
							 ehelp(q_1,p_1,r_1,x), \text{ if } x > v'(s).
						\end{cases}
\end{equation}
Here we use the function $ehelp$ defined in the previous section, $ehelp \in \mathcal{F}$. It is not hard to see that all of the defined functions belong to the class $\mathcal{F}$ (due to conditions (1), (2) and (8) in Definition \ref{accpair}). Our purpose is to show that $d_0,d,e,f,g,h$ satisfy the two conditions in Definition \ref{tzcond}. We fix $\xi_1 \in D$.

Let us choose a natural number $T$ according to Lemma \ref{l2}. Let $s_0$ be a natural number, such that $s_0 \geq T$ and $|\xi_1| \leq s_0+1$. We claim that condition (1) in Definition \ref{tzcond} is satisfied for this $s_0$. Indeed, let $s \geq s_0$ and $p_1^0,q_1^0,r_1^0\in\mathbb{N}$ are numbers, such that (\ref{ccteq1}) holds. We define the unary functions $f_1^0$ and $g_1^0$ by the equalities (\ref{defzero}). From Remark \ref{propehelp} for all $x\in\mathbb{N}$ we have
\begin{equation}\label{zeroineq}
 \left| \frac{f_1^0(x) - g_1^0(x)}{x+1} - \frac{p_1^0 - q_1^0}{r_1^0 + 1}\right| \leq \frac{1}{2(x+1)},\;\;\; f_1^0(x).g_1^0(x) = 0.
\end{equation}
Let us choose unary functions $\overline{f_1^0}$ and $\overline{g_1^0}$, such that for all $x\in\mathbb{N}$
$$ x \leq v'(s) \Longrightarrow \overline{f_1^0}(x) = f_1^0(x)\;\&\; \overline{g_1^0}(x) = g_1^0(x), $$
\begin{equation}\label{zeroname}
 x > v'(s) \Longrightarrow \left| \frac{\overline{f_1^0}(x) - \overline{g_1^0}(x)}{x+1} - \xi_1\right| < \frac{1}{x+1}\;\&\; \overline{f_1^0}(x).\overline{g_1^0}(x) = 0.
\end{equation}
The right-hand side of (\ref{zeroname}) is true also for $x \leq v'(s)$, since
$$ \left| \frac{\overline{f_1^0}(x) - \overline{g_1^0}(x)}{x+1} - \xi_1\right| = \left| \frac{f_1^0(x) - g_1^0(x)}{x+1} - \frac{p_1^0 - q_1^0}{r_1^0 + 1} + \frac{p_1^0 - q_1^0}{r_1^0 + 1} - \xi_1\right| $$
$$ < \frac{1}{2(x+1)} + \frac{1}{d_0(s)+1} \leq \frac{1}{2(x+1)} + \frac{1}{2(v'(s)+1)} \leq \frac{1}{2(x+1)} + \frac{1}{2(x+1)} = \frac{1}{x+1}, $$
$$ \overline{f_1^0}(x).\overline{g_1^0}(x) = f_1^0(x).g_1^0(x) = 0. $$
We used that $d_0(s) \geq 2v'(s)+1$. Thus $(\overline{f_1^0}, \overline{g_1^0}, \mathrm{id}_\mathbb{N})$ names $\xi_1$. According to the choice of $T$ there exists $s_1$, such that $s_1 \leq T$ and $E(\overline{f_1^0},\overline{g_1^0},\mathrm{id}_\mathbb{N})(s_1) = 0$. Obviously, $\mathrm{id}_\mathbb{N}$ is majorized by $\lambda x.u(x,s)$. From the right-hand side of (\ref{zeroname}) it follows that any of the numbers $\overline{f_1^0}(x), \overline{g_1^0}(x)$ is less than $(|\xi_1|+1)(x+1)$. But from the choice of $s_0$ we have $|\xi_1| \leq s_0+1 \leq s+1$ and so the functions $\overline{f_1^0},\overline{g_1^0}$ are majorized by $\lambda x.u(x,s)$. The same is true for the functions $f_1^0,g_1^0$, since from (\ref{zeroineq}) and (\ref{ccteq1}) we have
$$ \left|\frac{f_1^0(x) - g_1^0(x)}{x+1}\right| \leq \left|\frac{p_1^0-q_1^0}{r_1^0+1}\right| + \frac{1}{2} < |\xi_1| + 1. $$
For $x \leq v(s,s_1)$ we have the equalities $f_1^0(x) = \overline{f_1^0}(x),\;g_1^0(x) = \overline{g_1^0}(x)$, since $s_1 \leq T \leq s_0 \leq s$ and therefore $v(s,s_1) \leq v'(s)$. From the uniformity condition and the choice of $\Omega$ we obtain
$$ E(f_1^0,g_1^0,\mathrm{id}_\mathbb{N})(s_1) = E(\overline{f_1^0},\overline{g_1^0},\mathrm{id}_\mathbb{N})(s_1) = 0. $$
Thus $e(p_1^0,q_1^0,r_1^0,s) = \min_{x \leq s} E(f_1^0,g_1^0,\mathrm{id}_\mathbb{N})(x) = 0$, because $s_1 \leq s$.

Now we prove condition (2) from Definition \ref{tzcond}. Let $s,p_1^0,q_1^0,r_1^0,p_1,q_1,r_1,t$ be natural numbers, which satisfy the premises (\ref{ccteq1}), (\ref{ccteq2}), (\ref{ccteq3}) in condition (2). In other words, we have the inequalities
$$ \left|\frac{p_1^0 - q_1^0}{r_1^0 + 1} - \xi_1\right| < \frac{1}{d_0(s)+1},\;\;\; \left|\xi_1\right| \leq s+1,\;\;\; \left|\frac{p_1 - q_1}{r_1 + 1} - \xi_1\right| < \frac{1}{d(s,t)+1} $$ and the equality $ e(p_1^0,q_1^0,r_1^0,s) = 0 $.
Let $p,q,r$ be defined by the equalities (\ref{ccteq51}), (\ref{ccteq52}), (\ref{ccteq53}). Our goal is to prove the inequality (\ref{ccteq4}).
First we define the functions $f_1^0$ and $g_1^0$ by the equalities (\ref{defzero}). For all $x\in\mathbb{N}$ we have (\ref{zeroineq}) from above. We also define the functions $f_1$, $g_1$ with the equalities (\ref{deff}), (\ref{defg}), respectively. We note explicitly that for $x \leq v'(s)$ we have
$$ f_1^0(x) = f_1(x),\;\;\; g_1^0(x) = g_1(x). $$
For all $x\in\mathbb{N}$ we have
$$ f_1(x).g_1(x) = 0, $$
\begin{equation}\label{ineqfg1}
 x \leq v'(s) \Longrightarrow \left| \frac{f_1(x) - g_1(x)}{x+1} - \frac{p_1 - q_1}{r_1 + 1}\right| \leq \frac{5}{6(x+1)},\\
\end{equation}
\begin{equation}\label{ineqfg2}
 x > v'(s) \Longrightarrow \left| \frac{f_1(x) - g_1(x)}{x+1} - \frac{p_1 - q_1}{r_1 + 1}\right| \leq \frac{1}{2(x+1)}.
\end{equation}
Only (\ref{ineqfg1}) requires a proof, (\ref{ineqfg2}) follows from Remark \ref{propehelp}. Let $x \leq v'(s)$. Then
$$ \left| \frac{f_1(x) - g_1(x)}{x+1} - \frac{p_1 - q_1}{r_1 + 1}\right| $$
$$ = \left| \frac{f_1^0(x) - g_1^0(x)}{x+1} - \frac{p_1^0 - q_1^0}{r_1^0 + 1} + \frac{p_1^0 - q_1^0}{r_1^0 + 1} - \xi_1 + \xi_1 - \frac{p_1 - q_1}{r_1 + 1}\right| $$
$$ < \frac{1}{2(x+1)} + \frac{1}{d_0(s)+1} + \frac{1}{d(s,t)+1} \leq \frac{1}{2(x+1)} + \frac{1}{6(v'(s)+1)} + \frac{1}{6(v'(s)+1)} $$
$$ \leq \frac{1}{2(x+1)} + \frac{1}{6(x+1)} + \frac{1}{6(x+1)} = \frac{5}{6(x+1)}. $$
Here we used that $d_0(s) = 6v'(s) + 5$ and $d(s,t) \geq 6v'(s) + 5$.

From $e(p_1^0,q_1^0,r_1^0,s) = 0$ we have that $E(f_1^0,g_1^0,\mathrm{id}_\mathbb{N})(s_1) = 0$ for some $s_1 \leq s$. Let us choose the least such $s_1$.
We choose functions $\overline{f_1^0}$ and $\overline{g_1^0}$ exactly as above and we obtain that $(\overline{f_1^0}, \overline{g_1^0}, \mathrm{id}_\mathbb{N})$ names $\xi_1$ and
$$ E(\overline{f_1^0},\overline{g_1^0},\mathrm{id}_\mathbb{N})(s_1) = E(f_1^0,g_1^0,\mathrm{id}_\mathbb{N})(s_1) = 0. $$
The only difference is that this time we have the inequality $|\xi_1| \leq s+1$ directly in our premises.
We then choose functions $\overline{f_1}$ and $\overline{g_1}$, such that
$$ x \leq w'(s,t) \Longrightarrow \overline{f_1}(x) = f_1(x)\;\&\; \overline{g_1}(x) = g_1(x), $$
$$ x > w'(s,t) \Longrightarrow \left| \frac{\overline{f_1}(x) - \overline{g_1}(x)}{x+1} - \xi_1\right| < \frac{1}{x+1}\;\&\; \overline{f_1}(x).\overline{g_1}(x) = 0. $$
Then for $x \leq w'(s,t)$ we have
$$ \overline{f_1}(x).\overline{g_1}(x) = f_1(x).g_1(x) = 0, $$
$$ x \leq v'(s) \Longrightarrow \left| \frac{\overline{f_1}(x) - \overline{g_1}(x)}{x+1} - \xi_1\right| = \left| \frac{f_1(x) - g_1(x)}{x+1} - \frac{p_1 - q_1}{r_1 + 1} + \frac{p_1 - q_1}{r_1 + 1} - \xi_1\right| $$
$$ < \frac{5}{6(x+1)} + \frac{1}{d(s,t)+1} \leq \frac{5}{6(x+1)} + \frac{1}{6(v'(s)+1)} \leq \frac{5}{6(x+1)} + \frac{1}{6(x+1)} = \frac{1}{x+1}, $$
$$ x > v'(s) \Longrightarrow \left| \frac{\overline{f_1}(x) - \overline{g_1}(x)}{x+1} - \xi_1\right| = \left| \frac{f_1(x) - g_1(x)}{x+1} - \frac{p_1 - q_1}{r_1 + 1} + \frac{p_1 - q_1}{r_1 + 1} - \xi_1\right| $$
$$ < \frac{1}{2(x+1)} + \frac{1}{d(s,t)+1} \leq \frac{1}{2(x+1)} + \frac{1}{2(x+1)} = \frac{1}{x+1}. $$
For the first implication we used (\ref{ineqfg1}) and the inequality $d(s,t) \geq 6v'(s) + 5$ and for the second one -- (\ref{ineqfg2}) and $d(s,t) \geq 2w'(s,t) + 1 \geq 2x + 1$. Thus we obtain that ($\overline{f_1},\overline{g_1},\mathrm{id}_\mathbb{N}$) names $\xi_1$. We have that $\mathrm{id}_\mathbb{N}, \overline{f_1^0}, \overline{g_1^0}, \overline{f_1}, \overline{g_1}$ are majorized by $\lambda x.u(x,s)$ and also
$$ \overline{f_1^0}(x) = f_1^0(x) = f_1(x) = \overline{f_1}(x),\;\;\;\overline{g_1^0}(x) = g_1^0(x) = g_1(x) = \overline{g_1}(x) $$ 
for $x \leq v(s,s_1)$, since $v(s,s_1) \leq v'(s) \leq w'(s,t)$.
From the uniformity condition and the choice of $\Omega$,
$$ E(\overline{f_1},\overline{g_1},\mathrm{id}_\mathbb{N})(s_1) = E(\overline{f_1^0},\overline{g_1^0},\mathrm{id}_\mathbb{N})(s_1) = 0. $$
The choice of the operators $E, F, G, H$ gives the inequality
$$ \left| \frac{F(\overline{f_1},\overline{g_1}, \mathrm{id}_\mathbb{N},\lambda x.s_1)(t) - G(\overline{f_1},\overline{g_1}, \mathrm{id}_\mathbb{N},\lambda x.s_1)(t)}{H(\overline{f_1},\overline{g_1}, \mathrm{id}_\mathbb{N}, \lambda x.s_1)(t) + 1} - \theta(\xi_1) \right| < \frac{1}{t+1}. $$
But $\mathrm{id}_\mathbb{N}, \lambda x.s_1, \overline{f_1}, \overline{g_1}$ are majorized by $\lambda x.u(x,s)$, because $s_1 \leq s$. The same is true for $f_1,g_1$, since
$$ \left|\frac{f_1(x) - g_1(x)}{x+1}\right| \leq \left|\frac{p_1-q_1}{r_1+1}\right| + \frac{5}{6} < |\xi_1| + 1 \leq s+2. $$
We also have $\overline{f_1}(x) = f_1(x),\; \overline{g_1}(x) = g_1(x)$ for $x \leq w(s,t)$, since $w(s,t) \leq w'(s,t)$. From the uniformity condition and the choice of the operators $\Omega_1,\Omega_2,\Omega_3$ we obtain
$$ F(\overline{f_1},\overline{g_1}, \mathrm{id}_\mathbb{N},\lambda x.s_1)(t) = F(f_1,g_1,\mathrm{id}_\mathbb{N},\lambda x.s_1)(t), $$
$$ G(\overline{f_1},\overline{g_1}, \mathrm{id}_\mathbb{N},\lambda x.s_1)(t) = G(f_1,g_1,\mathrm{id}_\mathbb{N},\lambda x.s_1)(t), $$
$$ H(\overline{f_1},\overline{g_1}, \mathrm{id}_\mathbb{N},\lambda x.s_1)(t) = H(f_1,g_1,\mathrm{id}_\mathbb{N},\lambda x.s_1)(t). $$
Moreover, $s_1 = \mu_{x \leq s}[ E(f_1^0,g_1^0,\mathrm{id}_\mathbb{N})(x) = 0 ] = b(p_1^0,q_1^0,r_1^0,s)$, thus
$$ F(f_1,g_1,\mathrm{id}_\mathbb{N},\lambda x.s_1)(t) = F(f_1,g_1,\mathrm{id}_\mathbb{N},\lambda x.b(p_1^0,q_1^0,r_1^0,s))(t) = p, $$
$$ G(f_1,g_1,\mathrm{id}_\mathbb{N},\lambda x.s_1)(t) = G(f_1,g_1,\mathrm{id}_\mathbb{N},\lambda x.b(p_1^0,q_1^0,r_1^0,s))(t) = q, $$
$$ H(f_1,g_1,\mathrm{id}_\mathbb{N},\lambda x.s_1)(t) = H(f_1,g_1,\mathrm{id}_\mathbb{N},\lambda x.b(p_1^0,q_1^0,r_1^0,s))(t) = r. $$
We reached our goal -- the inequality (\ref{ccteq4}) and so the proof is completed.
\end{proof}

\begin{cor} Let $\mathcal{F}$ be a class of functions, which satisfies conditions (1) and (2) from Definition \ref{accpair}. Then a real function is conditionally $\mathbf{O}_\mathcal{F}$-computable if and only if it is conditionally $\mathcal{F}$-computable in the style of Tent and Ziegler.
\end{cor}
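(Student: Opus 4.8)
The plan is to obtain this corollary as an immediate specialization of Theorem \ref{charthcond}, exactly paralleling how the corollary of Skordev from \cite{chart} in the uniform setting is derived from the general characterization theorem via Proposition \ref{propac}. The key observation is that the hypotheses placed on $\mathcal{F}$ here --- closure under the conditions (1) and (2) of Definition \ref{accpair} --- are precisely the hypotheses of Proposition \ref{propac}.

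First I would apply Proposition \ref{propac} to the given class $\mathcal{F}$. Since $\mathcal{F}$ contains the initial functions and is closed under substitution and bounded minimization, that proposition guarantees that the pair $(\mathcal{F},\mathbf{O}_\mathcal{F})$ is acceptable in the sense of Definition \ref{accpair}. With an acceptable pair in hand, I would then invoke Theorem \ref{charthcond} with the choice $\mathbf{O} = \mathbf{O}_\mathcal{F}$. The theorem asserts that for any acceptable pair $(\mathcal{F},\mathbf{O})$ and any real function $\theta : D \to \mathbb{R}$ with $D \subseteq \mathbb{R}^k$, conditional $\mathbf{O}$-computability of $\theta$ is equivalent to conditional $\mathcal{F}$-computability in the style of Tent and Ziegler. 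Specialising $\mathbf{O}$ to $\mathbf{O}_\mathcal{F}$ yields verbatim the equivalence claimed in the corollary, uniformly over all arities $k$.

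I expect no genuine obstacle: the entire substantive content resides in Theorem \ref{charthcond}, whose proof handles the passage between operators acting on names and the purely numerical conditions (\ref{ccteq1})--(\ref{ccteq4}). This corollary is simply the instantiation of that theorem at the canonical least acceptable second component $\mathbf{O}_\mathcal{F}$, and so its proof is a one-step reduction. The only point worth stating explicitly is that Proposition \ref{propac} is what licenses this reduction, since it is precisely the result that upgrades the mild closure hypotheses on $\mathcal{F}$ to the full acceptability of $(\mathcal{F},\mathbf{O}_\mathcal{F})$ required by Theorem \ref{charthcond}.
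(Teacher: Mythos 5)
Your proposal is correct and matches the paper's own proof exactly: the paper also proves this corollary by applying Proposition \ref{propac} to conclude that $(\mathcal{F},\mathbf{O}_\mathcal{F})$ is acceptable and then instantiating Theorem \ref{charthcond} at $\mathbf{O} = \mathbf{O}_\mathcal{F}$. Nothing is missing; your explicit remark that conditions (1) and (2) are precisely the hypotheses of Proposition \ref{propac} is the whole content of the one-line argument.
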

\begin{proof} We apply Proposition \ref{propac}. \end{proof}

\begin{cor}\label{corcon} For an acceptable pair $(\mathcal{F},\mathbf{O})$ and a real function $\theta$ the following are equivalent:
	\begin{itemize}
	 \item $\theta$ is conditionally $\mathbf{O}$-computable,
	 \item $\theta$ is conditionally $\mathcal{F}$-computable in the style of Tent and Ziegler,
	 \item $\theta$ is conditionally $\mathbf{O}_\mathcal{F}$-computable.
  \end{itemize}
	\end{cor}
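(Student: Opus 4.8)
The plan is to read off the three-way equivalence from Theorem~\ref{charthcond}, applied twice, with the Tent--Ziegler-style notion serving as the common hinge. The point is that this middle notion---conditional $\mathcal{F}$-computability in the style of Tent and Ziegler---depends only on the first component $\mathcal{F}$ and is completely insensitive to the choice of the operator class; so I can link two different acceptable pairs sharing the same $\mathcal{F}$.

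First I would note that, since $(\mathcal{F},\mathbf{O})$ is an acceptable pair, in particular $\mathcal{F}$ satisfies conditions (1) and (2) from Definition~\ref{accpair}. Applying Theorem~\ref{charthcond} directly to the pair $(\mathcal{F},\mathbf{O})$ then yields that $\theta$ is conditionally $\mathbf{O}$-computable if and only if $\theta$ is conditionally $\mathcal{F}$-computable in the style of Tent and Ziegler. This settles the equivalence of the first two bullets.

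Next I would bring in the third notion. By Proposition~\ref{propac}, the fact that $\mathcal{F}$ satisfies (1) and (2) already guarantees that the pair $(\mathcal{F},\mathbf{O}_\mathcal{F})$ is \emph{itself} acceptable. Hence Theorem~\ref{charthcond} applies equally well to $(\mathcal{F},\mathbf{O}_\mathcal{F})$ and gives that $\theta$ is conditionally $\mathbf{O}_\mathcal{F}$-computable if and only if $\theta$ is conditionally $\mathcal{F}$-computable in the style of Tent and Ziegler. Chaining this with the previous equivalence through the shared middle notion shows that all three conditions are equivalent.

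There is essentially no obstacle here, since the substantive work has already been packaged into Theorem~\ref{charthcond} and Proposition~\ref{propac}; this is why the statement is phrased as a corollary. The only matter requiring care is to verify that the hypotheses of the theorem are genuinely met for both invocations---that is, that both $(\mathcal{F},\mathbf{O})$ and $(\mathcal{F},\mathbf{O}_\mathcal{F})$ are acceptable pairs. The former holds by assumption, and the latter follows from Proposition~\ref{propac}, so the argument goes through cleanly.
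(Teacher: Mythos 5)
Your proof is correct and matches the paper's own route exactly: the corollary is obtained by applying Theorem~\ref{charthcond} twice---once to $(\mathcal{F},\mathbf{O})$ and once to $(\mathcal{F},\mathbf{O}_\mathcal{F})$, whose acceptability is supplied by Proposition~\ref{propac}---with the Tent--Ziegler-style notion, which depends only on $\mathcal{F}$, serving as the common hinge. Nothing is missing.
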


\section*{Conclusion}
We plan to extend our results for uniform and conditional $\mathcal{M}^2$-computability to other nonelementary real functions from calculus, such as the gamma function and the Riemann zeta function. The results of Tent and Ziegler \cite{tz} in this direction, particularly Theorem 5.1 for the complexity of integration, cannot be used for the class $\mathcal{M}^2$, because it is not known if $\mathcal{M}^2$ is closed under bounded summation.

Corollary \ref{corcon} will prove to be very useful, because it provides extra machinery for proving conditional $\mathcal{M}^2$-computability. Namely, we can take any class of operators $\mathbf{O}$, which forms an acceptable pair with $\mathcal{M}^2$ and prove conditional $\mathbf{O}$-computability. And such classes of operators $\mathbf{O}$, which are broader than the class of $\mathcal{M}^2$-substitutional operators, do exist.

\end{document}